\newtheorem*{maintheorem*}{Main Theorem}
\newtheorem{theorem}{Theorem}[section]
\newtheorem{prop}[theorem]{Proposition}
\newtheorem{cor}[theorem]{Corollary}
\theoremstyle{definition}
\newtheorem{defn}[theorem]{Definition}
\newtheorem{rem}[theorem]{Remark}
\newtheorem{example}[theorem]{Example}
\newtheorem{question}[theorem]{Question}
\numberwithin{equation}{section}
\newcommand{\cc}{\mathbb{C}}
\newcommand{\ff}{\mathbb{F}}
\newcommand{\nn}{\mathbb{N}}
\newcommand{\pp}{\mathbb{P}}
\newcommand{\qq}{\mathbb{Q}}
\newcommand{\rr}{\mathbb{R}}
\newcommand{\zz}{\mathbb{Z}}
\providecommand\ldb{\llbracket}
\providecommand\rdb{\rrbracket}
\newcommand{\gp}{\mathcal{G}}
\newcommand{\uu}{\mathcal{U}}
\keywords{finite factorization domain, monoid algebra, $D+M$ construction, finite factorization property, FFD, UFD, integral domain}
\subjclass[2020]{Primary: 13A05; Secondary: 13G05}
\begin{document}
	
	\mbox{}
	\title{An unrestricted notion of the finite factorization property}
	
	\author{Jonathan Du}
	\address{PRIMES-USA\\MIT\\Cambridge, MA 02139}
	\email{jonathan.cx.du@gmail.com}
	
	\author{Felix Gotti}
	\address{Department of Mathematics\\MIT\\Cambridge, MA 02139}
	\email{fgotti@mit.edu}

\date{\today}

\begin{abstract}
		An nonzero element of an integral domain (or commutative cancellative monoid) is called \emph{atomic} if it can be written as a finite product of irreducible elements (also called atoms). In this paper, we introduce and investigate an \emph{unrestricted} version of the finite factorization property, extending the work on unrestricted UFDs carried out by Coykendall and Zafrullah who first studied unrestricted. An integral domain is said to have the unrestricted finite factorization (U–FF) property if every atomic element has only finitely many factorizations, or equivalently, if its atomic subring is a finite factorization domain (FFD). We position the property U-FF within the hierarchy of classical finiteness conditions, showing that every IDF domain is U–FF but not conversely, and we analyze its behavior under standard constructions. In particular, we determine necessary and sufficient conditions for the U–FF property to ascend along $D+M$ extensions, prove that nearly atomic IDF domains are FFDs, and construct an explicit example of an integral domain with the U-FF property whose polynomial ring is not U–FF. These results demonstrate that the U–FF property behaves analogously to the IDF property, while providing a finer interpolation between the IDF and the FF conditions.
\end{abstract}

\bigskip
\maketitle

\bigskip
\section{Introduction}
\label{sec:intro}

The study of factorizations lies at the heart of modern commutative algebra and number theory. Understanding how elements in a ring or monoid decompose into irreducibles—and how such decompositions may fail to be unique—has shaped the algebraic landscape from the nineteenth century to the present. The origin of this line of inquiry can be traced to the failure of unique factorization in cyclotomic rings of integers, which led Kummer~\cite{eK1847} to introduce his \emph{ideal numbers} and Dedekind~\cite{rD1871} to formalize the theory of ideals. Their work not only repaired Gabriel’s erroneous proof of Fermat’s Last Theorem~\cite{gL1847} but also inaugurated the arithmetic of algebraic number fields. The resulting notion of the ideal class group provided the first systematic measure of the deviation from unique factorization, a concept that remains fundamental in algebraic number theory and arithmetic geometry.
\smallskip

A century later, in 1960, Carlitz~\cite{lC60} deepened this connection by proving that a Dedekind domain is half-factorial if and only if its class group has size at most~$2$ (being a UFD if and only if its class group has size $1$). Half-factoriality has been systematically studied since the eighties by Chapman, Coykendall, et al. (see, for instance, \cite{CC00}). Carlitz's result established that the degree of non-uniqueness in factorization is completely governed by the arithmetic of the class group. Thus, Dedekind domains with cyclic class groups of order~$2$ represent the simplest nontrivial deviation from having unique factorization, although any two factorizations of the same nonzero nonunit have the same length (i.e., number of irreducible factors). Carlitz’s result marked a turning point between classical ideal theory and modern factorization theory, introducing a quantitative viewpoint that would later evolve into the systematic study of length sets and other arithmetical invariants.
\smallskip

The significance of factorization theory extends far beyond its historical origins in the correction of Gabriel’s flawed proof of Fermat’s Last Theorem and Kummer’s introduction of ideal numbers. Today, factorization theory serves as a unifying framework connecting diverse areas of algebra, number theory, geometry, and combinatorics. The following overview summarizes some of its principal applications and contemporary directions.

\smallskip
%%%%%%%%%%%%%%%%%%%%%%%%%%%%%%%%%%%%%%%%%%%%%%%%%%%%%%%%%%%%
\subsection{Algebraic Number Theory and Arithmetic Geometry}

The earliest and most profound applications of factorization theory lie in algebraic number theory. Kummer’s ideal numbers and Dedekind’s subsequent formalization of ideals provided the conceptual foundation for the study of \emph{ideal class groups}, whose structure measures the failure of unique factorization in rings of algebraic integers. Class groups play a central role in understanding the arithmetic of algebraic number fields, influencing the solvability of Diophantine equations, the structure of algebraic curves, and the arithmetic of algebraic varieties (see \cite{AT67}). The modern theory of global fields, divisor class groups, and Picard groups in algebraic geometry continues to depend fundamentally on these ideas (see~\cite{sL94}).

\smallskip
%%%%%%%%%%%%%%%%%%%%%%%%%%%%%%%%%%%%%%%%%%%%%%%%%%
\subsection{Commutative Algebra and Module Theory}

Within commutative algebra, the relevance of factorization-theoretic finiteness properties is that they provide arithmetical surrogates for Noetherian finiteness and allow one to extend many structural and arithmetic results beyond the Noetherian or UFD settings, to vast classes of non-Noetherian rings and monoids where unique factorization fails but divisibility remains sufficiently well-behaved. The following are among the most classical factorization-theoretic finiteness properties studied in the literature:
\begin{itemize}
    \item the atomic property, which was introduced and studied by Cohn~\cite{pC68} in 1968 -- every nonzero element factors into finitely many irreducibles;
    \smallskip
    
    \item the IDF property, which was introduced and studied by Grams and Warner~\cite{GW75} in 1975 -- every nonzero element has finitely many irreducible divisors;
    \smallskip

    \item the BF property, which was introduced and studied by Anderson, Anderson, and Zafrullah~\cite{AAZ90} in 1990 -- every nonzero element has finite length set; and
    \smallskip
    
    \item the FF property, which was introduced and studied by Anderson, Anderson, and Zafrullah~\cite{AAZ90} in 1990 -- every nonzero element has only finitely many factorizations into irreducibles.
\end{itemize}
%The factorization behavior of monoid algebras $R[M]$ can often be deduced from that of $R$ and the underlying monoid $M$; the preservation or failure of atomicity, IDF, or FFD conditions in $R[M]$ provides a natural means of classifying these algebras according to their divisibility structure. 
Such finiteness conditions often act as workable substitutes for Noetherianity, enabling a systematic extension of classical finiteness results, including the FF property (and BF property) to broad classes of non-Noetherian rings (see, for instance, \cite{AG22}). On the other hand, the property of $v$-Noetherianity provides quantitative measures of how much arithmetic structure persists in non-Noetherian settings, which have proved instrumental in analyzing divisor-closed submonoids, $v$-ideals, and integral closures (see \cite{GH06a} and \cite[Chapters 2--4]{GH06}).

\smallskip
%%%%%%%%%%%%%%%%%%%%%%%%%%%%%%%%%%%%%%%%%%%%%%%%%%%%%%%
\subsection{Additive Combinatorics and Zero-Sum Theory}

A particularly fruitful interaction occurs between factorization theory and additive combinatorics. Through the work of Geroldinger, Halter-Koch, and Schmid it was recognized that sets of lengths of factorizations in Krull monoids correspond to zero-sum sequences over finite abelian groups (see, for instance, \cite{GS16,fHK92a}). This correspondence allows one to translate algebraic questions about non-unique factorizations into combinatorial problems on sumsets and sequences. Classical invariants such as the Davenport constant control the maximal degree of non-uniqueness of factorizations (see~\cite{aG09}). Consequently, techniques from additive number theory, including the Cauchy-Davenport~\cite{alC1813} and Olson theorems~\cite{jO69}, have become central tools in modern factorization research.

\smallskip
%%%%%%%%%%%%%%%%%%%%%%%%%%%%%%%%%%%%%%%%%%%%%%%%%%%%
\subsection{Algebraic Geometry and Invariant Theory}

In algebraic geometry, factorization properties describe the arithmetic and geometric behavior of coordinate rings and local rings. They play a crucial role in determining normality, factoriality, and divisor class groups of varieties (see \cite[Chapter~4]{CLS11}). In invariant theory, establishing whether an invariant ring under a group action is factorial (or a UFD) is essential to describing quotient varieties and their coordinate structures (see \cite{MFK94}). In this sense, factorization properties provide an algebraic language for geometric phenomena.

% \smallskip
% %%%%%%%%%%%%%%%%%%%%%%%%%%%%%%%%%%%%%%%%
% \subsection{Semigroup and Monoid Theory}

% Modern factorization theory naturally extends to commutative monoids, where it unifies the study of divisibility in algebraic systems lacking subtraction. 
% Applications include:
% \begin{itemize}
% 	\item the analysis of numerical semigroups and Puiseux monoids, which appear as value semigroups of curve singularities,
%     \smallskip
    
% 	\item the computation of arithmetical invariants such as elasticity, $\Delta$-sets, and catenary degrees, and
%     \smallskip
    
% 	\item applications to combinatorics on words, coding theory, and formal languages.
% \end{itemize}
% These developments have revealed that the arithmetic of monoids often mirrors the factorization behavior of associated integral domains and semigroup algebras.

\smallskip
%%%%%%%%%%%%%%%%%%%%%%%%%%%%%%%%%%%%%%%%%%%%%%%%%%%%%%%
\subsection{Computational and Algorithmic Applications}

From a computational perspective, factorization theory underlies key algorithms in computer algebra and cryptography. Algorithms for factoring integers and polynomials are fundamental to computational number theory and to public-key crypto-systems such as RSA and elliptic-curve encryption (see \cite[Chapters 1--3]{hC93}). Understanding patterns of non-unique factorizations can inform the design of secure arithmetic systems that resist decomposition-based attacks (see \cite{CP05}). Moreover, in symbolic computation, determining whether a given integral domain or commutative monoid satisfies atomicity, factoriality, or the FF property has algorithmic relevance in computer algebra systems such as \textsc{Magma} or \textsc{Singular} (see \cite{HL16}), and it serves as a structural finiteness/termination criterion for factorization procedures (see \cite{BHL17}), which underlies practical algorithms implemented in computer algebra systems such as \textsc{Singular} (cf. the computational framework in Greuel–Pfister).

% \smallskip
% %%%%%%%%%%%%%%%%%%%%%%%%%%%%%%%%%%%%%%%%%%%%%%%%%%%%%%%%%%%%%%%%%
% \subsection{Nonunique Factorization Invariants and Modern Trends}

% Recent developments have focused on quantifying the nonuniqueness of factorizations through numerical invariants such as the \emph{elasticity}, \emph{catenary degree}, and \emph{tame degree}. 
% These invariants provide refined measures of how far a structure lies from being factorial. 
% They have been extended to monoids and semirings appearing in tropical geometry, power monoids, and semigroup algebras. Closely related ideas such as \emph{MCD-finiteness}, the \emph{U-FF property}, and various restricted factorization properties exemplify how the finiteness philosophy initiated by Grams, Warner, Anderson, and Zafrullah continues to guide current research in both algebra and combinatorics.
% \smallskip

In summary, factorization theory provides a cohesive framework that connects number theory, algebra, geometry, and combinatorics. From Kummer’s ideal numbers to contemporary studies on Puiseux monoids and unrestricted finite factorization, the theory continues to illuminate how algebraic structures decompose, how finiteness conditions govern their arithmetic, and how these decompositions influence problems ranging from Diophantine equations to computational cryptography.

\smallskip
%%%%%%%%%%%%%%%%%%%%%%%%%%%%%%%%%%%%%%%%%%%%%%%%%%%%%%%%%%%
\subsection{The Bounded and Finite Factorization Properties}

Let $M$ be a cancellative commutative monoid. We say that an element $a \in M$ is atomic if it is a unit or can be written as a finite product of irreducibles (also called atoms), and we let $M_A$ denote the submonoid of~$M$ consisting of all atomic elements. In their seminal paper~\cite{AAZ90}, Anderson, Anderson, and Zafrullah first investigated integral domains with the BF/FF property, and systematically studied them in connection to atomicity, ACCP, Krullness, and Noetherianity. %Likewise, it is a \emph{BFD} (or \emph{BFM}) if every element has finitely many possible factorization lengths. 
Their inclusion diagram (Figure~\ref{fig:AAZ diagram}) remains a guiding reference for modern factorization theory.
\begin{center}
	\begin{figure}[h]
			\begin{tikzcd} %[cramped] 
					\textbf{ UF }    \arrow[r, Rightarrow] \arrow[red, r, Leftarrow, "/"{anchor=center,sloped}, shift left=1.7ex] 
					\arrow[d, Rightarrow, shift right=1ex] \arrow[red, d, Leftarrow, "/"{anchor=center,sloped}, shift left=1ex]
					& \textbf{ HF }    \arrow[d, Rightarrow, shift right=1ex] \arrow[red, d, Leftarrow, "/"{anchor=center,sloped}, shift left=1ex] \\
					\textbf{ FF }   \arrow[r, Rightarrow, shift right=1ex] \arrow[red, r, Leftarrow, "/"{anchor=center,sloped}, shift left=1ex]
					& \textbf{ BF }   \arrow[r, Rightarrow] \arrow[red, r, Leftarrow, "/"{anchor=center,sloped}, shift left=1.7ex] 
					& \textbf{ ACCP } \arrow[r, Rightarrow] \arrow[red, r, Leftarrow, "/"{anchor=center,sloped}, shift left=1.7ex] 
					& \textbf{ AT }
				\end{tikzcd}
			\caption{The implications in the diagram show the inclusions among subclasses of atomic monoids (AT stands for the class of atomic monoids). The (red) marked arrows emphasize that none of the shown implications are reversible.}
			\label{fig:AAZ diagram}
		\end{figure}
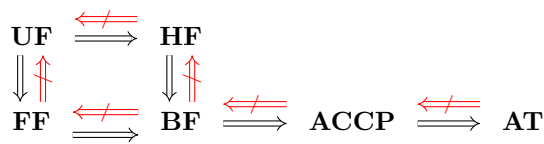
\end{center}

\smallskip
%%%%%%%%%%%%%%%%%%%%%%%%%%%%%%%%%%%%%%%%%%%%%%%%%%%%%%%%%%%
\subsection{The Unrestricted Finite Factorization Property}

Back in 2004, Coykendall and Zafrullah~\cite{CZ04} introduced the notion of an unrestricted unique factorization domain (U--UFD), which is an integral domain where every atomic element---rather than every nonzero element---has a unique factorization. This insight shifted attention from the entire integral domain to its atomic submonoid and inspired the broader idea that any classical factorization property can be extended \emph{unrestrictedly} by requiring it to hold only on the multiplicative submonoid consisting of all atomic elements. Formally, for any atomic property~$\mathcal{P}$, a monoid~$M$ is said to have property~$\mathcal{P}$ \emph{unrestrictedly} if its atomic submonoid~$M_A$ has property~$\mathcal{P}$. Motivated by this perspective, we introduce and study the unrestricted finite factorization property, which is the central property in the scope of this paper.

\begin{defn}
    A commutative monoid (or integral domain) is said to have the \emph{U--FF} property if every atomic element has finitely many factorizations.
\end{defn}

Equivalently, a monoid has the U--FF property if its atomic submonoid is an FFM. This definition provides a natural generalization of both the FF and the IDF properties. Indeed, every monoid having the FF property (and hence every atomic IDF monoid) also has the U--FF property (although the converse does not hold). The U--FF property fits naturally within the web of classical finiteness conditions. As with the U--UF property, it weakens the FF property but remains stronger than the IDF property, since every monoid having the IDF property will necessarily have the U--FF property (we will present this as Proposition~\ref{prop:IDF are U-FF}). Our first goal is therefore to locate the U--FF property precisely among other finiteness conditions such as the BF, IDF, and MCD--finite properties.

\smallskip
%%%%%%%%%%%%%%%%%%%%%%%%%%%%%%%%%
\subsection{Outline of the Paper}

Section~\ref{sec:background} collects notation and background on commutative monoids, factorization invariants, and the IDF and MCD--finite properties, making the paper self-contained. 
\smallskip

Section~\ref{sec:properties weaker than the FF} offers a preliminary study of the U--FF property, analyzing its relationship to other generalizations of the FF property, and provides several explicit examples and counterexamples illustrating the independence of the BF, IDF, and MCD--finite properties. 
\smallskip

Section~\ref{sec:D+M} investigates the behavior of the U--FF property under the $D+M$ construction, introduced and studied by Gilmer~\cite[Appendix II]{rG68} for valuation domains. Let $T$ be an integral domain, and let $K$ and $M$ be a subfield of $T$ and a nonzero maximal ideal of $T$, respectively, such that $T=K+M$. For a subring $D$ of $K$, set $R=D+M$. We establish precise conditions under which the U--FF property transfers from $T=K+M$ to subrings of $T$ of the form $R=k+M$, where $k$ as a subfield of $K$.
\smallskip

Section~\ref{sec:a characterization of FFDs via near atomicity} strengthens the known result that states that a monoid/domain has the FF property if and only if it is atomic and has the IDF property. This result was first established in the setting of integral domains in~\cite{AAZ90} and then adapted to cancellative commutative monoids in~\cite{fHK92}. Following~\cite{nL19}, we say that a monoid/domain is nearly atomic if there it contain a nonzero principal ideal whose nonzero elements are all atomic. Here we prove that a monoid/domain has the FF property if and only if it has both the nearly atomic and the IDF properties.
\smallskip

Finally, Section~\ref{sec:ascent} examines the ascent of the U--FF property to polynomial extensions, constructing an explicit example of an integral domain having the U--FF property whose polynomial extension fails to have the U--FF property, thereby showing that the property does not ascend in general to polynomial extensions.

\smallskip
This study situates the unrestricted finite factorization property within the landscape of classical factorization theory, showing that it behaves robustly under many standard constructions (such as $D+M$ pullbacks) yet fails to ascend to polynomial rings, much like the classical IDF property. The examples presented here illustrate the subtle boundary between atomic and unrestricted arithmetic in integral domains and contribute to the ongoing effort to extend the scope of factorization theory beyond the atomic setting.

%HI DEAR FRIEND!!!

\bigskip
%%%%%%%%%%%%%%%%%%%%
%%%%%%%%%%%%%%%%%%%%
\section{Background}
\label{sec:background}

In this section, we fix the notation, terminology, and preliminary results required to make the paper self-contained.

\medskip
%%%%%%%%%%%%%%%%%%%%%%%%%%%%%
\subsection{General Notation}

As usual, $\zz$, $\qq$, $\rr$, and $\cc$ denote the sets of integers, rational numbers, real numbers, and complex numbers, respectively. We let $\nn$ and $\nn_0$ denote the sets of positive and nonnegative integers, respectively, and write $\pp$ for the set of prime numbers. For $p \in \pp$ and $n \in \nn$, we let $\ff_{p^n}$ denote the finite field of cardinality $p^n$. If $m,n \in \zz$ with $m \le n$, then we write the discrete interval from $m$ to $n$ as follows:
\[ 
	\ldb m,n \rdb := \{j \in \zz : m \le j \le n\}.
\]
For a subset $S \subseteq \rr$ and $r \in \rr$, we write
\[
	S_{\ge r} := \{s \in S : s \ge r\} \quad \text{and} \quad S_{> r} := \{s \in S : s > r\}.
\]

\medskip
%%%%%%%%%%%%%%%%%%%%%%%%%%%%%%%%
\subsection{Commutative Monoids}

A pair $(S,\cdot)$, where $S$ is a set and $\cdot$ is a binary operation on $S$, is called a \emph{semigroup} if $\cdot$ is associative, in which case we say that $S$ is a semigroup under~$\cdot$. A semigroup $(S,\cdot)$ is \emph{commutative} if $s \cdot t = t \cdot s$ for all $s,t \in S$. Let $(S, \cdot)$ be a commutative semigroup. An element $r \in S$ is called \emph{cancellative} if, for all $s,t \in S$, the equality $r \cdot s = r \cdot t$ implies $s=t$. If every element of $S$ is cancellative, then $S$ itself is said to be \emph{cancellative}. As every monoid considered in this paper is both commutative and cancellative, we adopt the following convention.
\begin{defn}
	Throughout this paper, the single term \emph{monoid}\footnote{In the standard literature, a monoid is a semigroup with an identity element.} refers to a commutative and cancellative semigroup $(S,\cdot)$ with an element $1 \in S$ with $1 \cdot s = s$ for all $s \in S$ that we call \emph{identity}.
\end{defn}

For the rest of this section, let $(M, \cdot)$ be a monoid and let $1$ be \emph{the} idenitity element of $M$ (clearly, $M$ has exactly one identity element). As it is customary, we will write~$M$ to avoid the more cumbersome notation $(M,\cdot)$. A nonempty subset $N \subseteq M$ is a \emph{submonoid} if it is closed under multiplication and contains the identity element of~$M$. Given a subset $S \subseteq M$, the intersection of all the submonoids of $M$ containing~$S$ is denoted by $\langle S \rangle$ and called the \emph{submonoid generated} by~$S$. %If $M$ can be generated by a finite subset, then $M$ is said to be \emph{finitely generated}. 
It is convenient to consider two abelian groups associated to~$M$. The abelian group consisting of all units (i.e., invertible elements) of~$M$ is denoted by $\uu(M)$ and is called the \emph{group of units} of $M$. We say that $M$ is \emph{reduced} if $\uu(M)$ is the trivial group. The quotient $M/\uu(M)$ is a reduced monoid, usually referred to as the \emph{reduced monoid} of~$M$. 
\smallskip

The abelian group consisting of all formal quotients of elements of~$M$ is denoted by~$\gp(M)$ and is referred to as the \emph{Grothendieck group} of $M$. Because $M$ is cancellative, it embeds into $\gp(M)$ in a natural way: indeed, the $\gp(M)$ is the smallest abelian group (up to isomorphism) containing an isomorphic copy of~$M$. We say that $M$ is \emph{torsion-free} if $\gp(M)$ is a torsion-free abelian group. The \emph{rank} of~$M$ is the rank of the abelian group $\gp(M)$, viewed as a $\zz$-module. Then monoid $M$ is \emph{linearly orderable} if there exists a total order relation $\preceq$ on $M$ such that if $a \prec b$, then $a+c \prec b+c$ for all $a,b,c \in M$. It follows by a Levi's result~\cite{fL13} that an abelian group is linearly orderable if and only if it is torsion-free, and from this one can deduce $M$ is a linearly orderable monoid if and only if it is a torsion-free monoid.

\medskip
%%%%%%%%%%%%%%%%%%%%%%%%%%%%%%%%%%%%%%%
\subsection{Divisibility and Atomicity}

For $b,c \in M$, we say that $c$ \emph{divides} $b$ if there exists $d \in M$ such that $b = cd$; we then write $c \mid_M b$. If both relations $b \mid_M c$ and $c \mid_M b$ hold, then $b$ and $c$ are said to be \emph{associates}, in which case we write $b \sim c$. This relation is an equivalence relation on~$M$. A submonoid $N$ of~$M$ is said to be \emph{divisor-closed} if $a \in N$ and $b \mid_M a$ together imply $b \in N$. An element $d \in M$ is a \emph{common divisor} of a nonempty subset $S \subseteq M$ if $d \mid_M s$ for all $s \in S$. A common divisor $d$ of $S$ in $M$ is called a \emph{maximal common divisor} (MCD) of~$S$ if the only common divisors of
\[
	S/d := \{ s/d : s \in S \}
\]
are the units of~$M$. Following Eftekhari and Khorsandi~\cite{EK18}, we say that the monoid~$M$ is \emph{MCD-finite} if every nonempty finite subset of $M$ has only finitely many MCDs up to associates.
\smallskip

An element $a \in M \setminus \uu(M)$ is an \emph{atom} or an \emph{irreducible element} if $a = uv$ with $u,v \in M$ implies $u \in \uu(M)$ or $v \in \uu(M)$. The set of atoms of $M$ is denoted by $\mathcal{A}(M)$. An element $b \in M$ is \emph{atomic} if it is a unit or can be written as a finite product of atoms (allowing repetitions). The set of all atomic elements of~$M$ is a submonoid, which we refer to as the \emph{atomic submonoid} of~$M$ and denoted by $A(M)$. Following Cohn~\cite{pC68}, we say that $M$ is \emph{atomic} if $A(M) = M$. Following Boynton and Coykendall~\cite{BC15}, we say that $M$ is \emph{almost atomic} if for each $b \in M$ there exists an atomic element $a \in M$ such that $ab$ is also an atomic element of~$M$. Following Lockard~\cite{nL19}, we say that the monoid $M$ is \emph{nearly atomic} if there exists $c \in M$ such that every element in the principal ideal $cM$ is atomic in $M$. An integral domain is \emph{almost atomic} (resp., \emph{nearly atomic}) provided that its multiplicative monoid is almost atomic (resp., nearly atomic). We say that $M$ is an \emph{irreducible-divisor-finite} (IDF) monoid if every element of~$M$ is divisible by only finitely many atoms up to associates, in which case we also say that $M$ has the IDF property. 
\smallskip

%We say that $M$ is an \emph{irreducible-divisor-finite} (IDF) monoid if every element of~$M$ is divisible by only finitely many atoms up to associates, in which case we also say that $M$ has the IDF property. 
%The question of whether the IDF property ascends to polynomial extensions was first posed by Anderson, Anderson, and Zafrullah~\cite{AAZ90} back in~1990 and negatively answered by Malcolmson and Okoh~\cite{MO09} two decades later. Positive results were obtained for various classes: the ascent of the IDF property holds for GCD domains~\cite{MO09}, for pre-Schreier domains~\cite{mZ17}, for MCD-finite domains~\cite{EK18}. \color{red}Here we prove that the U-UF property, which is a generalization of the IDF property (Proposition~\ref{prop:IDF are U-FF}), also ascends to polynomial extensions over the class of MCD-finite domains. \color{black}

%, and for PSP domains~\cite{GZ23}. More recently, Gonzalez and Panpaliya~\cite{GP25} showed that the IDF property does not ascend to polynomial rings over integral domains satisfying Gauss’s lemma.

\medskip
%%%%%%%%%%%%%%%%%%%%%%%%%%%
\subsection{Factorizations}

Let $\mathsf{Z}(M)$ denote the free commutative monoid on the set of atoms of the reduced monoid $M/\uu(M)$, and let $\pi : \mathsf{Z}(M) \to M/\uu(M)$ be the unique monoid homomorphism fixing each element of $\mathcal{A}(M/\uu(M))$. For each $b \in M$, set $\mathsf{Z}(b) := \pi^{-1}(b)$. The elements of $\mathsf{Z}(b)$ are called the \emph{factorizations} of~$b$ in~$M$. Clearly, $b$ has a factorization if and only if it is atomic. %An element $b \in M$ is called \emph{factorial} if $|\mathsf{Z}(b)| = 1$. 
The monoid $M$ is a \emph{unique factorization monoid} (UFM) if $|\mathsf{Z}(b)| = 1$ for all $b \in M$. Following Anderson, Anderson, and Zafrullah~\cite{AAZ90}, we say that $M$ is a \emph{finite factorization monoid} (FFM) if every element of~$M$ has a nonempty finite set of factorizations; that is, $1 \le |\mathsf{Z}(b)| < \infty$ for all $b \in M$. The following result is well known, and we will generalize it in Section~\ref{sec:a characterization of FFDs via near atomicity}.

\begin{theorem} \cite[Theorem~2]{fHK92}
    A monoid is an FFM if and only if it is an atomic IDF monoid.
\end{theorem}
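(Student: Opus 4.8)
The plan is to prove both implications of the stated equivalence, working throughout in the reduced monoid $M/\uu(M)$ so that factorizations can be identified with elements of the free commutative monoid $\mathsf{Z}(M)$ without ambiguity from associates.

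First I would handle the easy direction: if $M$ is an FFM, then $M$ is atomic by definition (every element has a nonempty set of factorizations), so it remains to verify the IDF property. Given $b \in M$, any atom $a$ dividing $b$ appears as a letter in some factorization of $bd^{-1}$-style argument; more directly, if $a \mid_M b$, write $b = ac$ and observe that concatenating a factorization of $c$ (which exists since $M$ is atomic) with the atom $a$ produces a factorization of $b$ in which $a$ occurs. Since $\mathsf{Z}(b)$ is finite, only finitely many distinct atoms can occur across all these finitely many factorizations, so $b$ has only finitely many atomic divisors up to associates. Hence $M$ is an atomic IDF monoid.

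For the converse, assume $M$ is atomic and IDF, and fix $b \in M$; I want to show $\mathsf{Z}(b)$ is finite (it is nonempty by atomicity). The key observation is that every factorization of $b$ uses only atoms drawn from the finite set $\mathcal{A}(b)$ of atoms dividing $b$ (up to associates), say $\mathcal{A}(b) = \{a_1, \dots, a_n\}$; this is exactly where the IDF hypothesis enters. So $\mathsf{Z}(b)$ embeds into the set of monomials $a_1^{e_1}\cdots a_n^{e_n}$ mapping to $b$ under $\pi$. To bound the exponents, I would invoke a bounded factorization argument: each $a_i$ is a nonunit, so there is a bound on how many total atoms can appear in a factorization of $b$. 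The cleanest way to get this is to note that an atomic IDF monoid is a BF monoid — but since the excerpt only gives us atomicity and IDF directly, I would instead argue by a finiteness-of-divisor-chains style estimate, or simply cite that for a fixed $b$ the total number of irreducible factors in any factorization of $b$ is bounded (if it were unbounded one could produce, using only the finitely many atoms in $\mathcal{A}(b)$, arbitrarily long proper divisor chains descending from $b$, and a compactness/pigeonhole argument on the exponent vectors forces a repetition yielding a unit among the $a_i$, a contradiction). With both the alphabet $\mathcal{A}(b)$ finite and the total exponent sum bounded, there are only finitely many candidate exponent vectors, hence $|\mathsf{Z}(b)| < \infty$.

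The main obstacle is the converse direction, specifically establishing the length bound: knowing only that the atom alphabet for $b$ is finite does not by itself prevent a single atom from dividing $b$ infinitely often in the monoid-theoretic sense, so one must rule out the existence of factorizations of unbounded length. I expect to resolve this by the pigeonhole argument sketched above — if factorizations of $b$ had unbounded length while drawing from a finite atom set, two of them would have exponent vectors comparable in the componentwise partial order, and cancelling the common part would exhibit a nonempty product of atoms that is a unit, contradicting that atoms are nonunits. Once that length bound is in hand, the finiteness of $\mathsf{Z}(b)$ is immediate, completing the proof.
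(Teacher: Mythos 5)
Your argument is correct: the forward direction is immediate, and for the converse the key step --- that two factorizations of $b$ over the finite alphabet of irreducible divisors of $b$ with componentwise-comparable exponent vectors must coincide (else cancellation produces a nonempty product of atoms equal to a unit), so that the exponent vectors form an antichain in $\nn_0^n$ and Dickson's lemma gives finiteness --- is exactly the standard proof of Halter-Koch's theorem. The paper itself only cites this result without reproducing a proof, so there is nothing to compare beyond noting that your reconstruction matches the classical argument from \cite{fHK92} and \cite{AAZ90}.
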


Therefore every FFM is an IDF monoid. Furthermore, if $M$ is an FFM and $S$ is a finite subset of~$M$, then after fixing $s \in S$, each MCD of~$S$ divides~$s$. Hence $S$ has only finitely many MCDs up to associates~\cite{fHK92}. Thus, every FFM is an MCD-finite monoid.

\begin{rem}\label{rem:FF implies MCD-finite}
	Every FFM is an MCD-finite monoid.
\end{rem}

If $z = a_1 \cdots a_\ell \in \mathsf{Z}(M)$ with $a_1,\dots,a_\ell \in \mathcal{A}(M/\uu(M))$, then $|z| := \ell$ is the \emph{length} of the factorization~$z$. For each $b \in M$, the set
\[
	\mathsf{L}(b) := \{ |z| : z \in \mathsf{Z}(b) \},
\]
is called the \emph{length set} of~$b$. The monoid $M$ is a \emph{bounded factorization monoid} (BFM) if each element $b \in M$ has a nonempty finite length set, that is, $1 \le |\mathsf{L}(b)| < \infty$ for all $b \in M$. As an immediate consequence of the definition, we obtain the following.

\begin{rem}
	Every FFM is a BFM.
\end{rem}

%As we shall see later, the BF and MCD-finite properties are independent of one another.

\medskip
%%%%%%%%%%%%%%%%%%%%%%%%%%%%%%%%%%%%%%%%%%%%%%%%
\subsection{Integral Domains and Monoid Domains}

Let $R$ be an integral domain. The multiplicative subset $R \setminus \{0\}$ is a monoid, which is denoted by $R^*$ and called the \emph{multiplicative monoid} of~$R$. As usual, $R^\times$ denotes the group of units of~$R$. An integral domain $R$ is called \emph{atomic} (an \emph{IDF domain}, an \emph{MCD-finite domain}, a \emph{BFD}, an \emph{FFD}, a \emph{UFD}) if its multiplicative monoid $R^*$ has the corresponding property. When convenient, we say that an atomic domain (resp., an IDF domain, an MCD-finite domain, a BFD, an FFD, a UFD) has the \emph{atomic} (resp., \emph{IDF}, \emph{MCD-finite}, \emph{BF}, \emph{FF}, or \emph{UF}) \emph{property}.
\smallskip

Let $M$ be a monoid, and let $x$ be an indeterminate. The \emph{monoid algebra} (or \emph{monoid domain}) of~$M$ over~$R$ consists of all the polynomial expressions with coefficients in $R$ and exponents in $M$ under polynomial-like addition and multiplication:
\[
	R[x;M] := \left\{ \sum_{i=1}^n c_i x^{q_i}  : (c_i, q_i) \in R \times M \text{ for every } i \in \ldb 1,n \rdb \right\}.
\]
The monoid algebra $R[x;M]$ is a commutative ring with identity. When there is no risk of confusion, we simply write $R[M]$ instead of $R[x;M]$. For every polynomial expression
\begin{equation} \label{eq:polynomial expression}
	f :=  \sum_{i=1}^n c_i x^{q_i} \in R[M],
\end{equation}
the set $\operatorname{supp} f := \{q_1,\dots,q_n\}$ is called the \emph{support} of~$f$. Now assume that $M$ is torsion-free. Since $R$ is an integral domain and $M$ is a cancellative torsion-free monoid, the monoid algebra $R[M]$ is also an integral domain, and its group of units is
\[
	R[M]^\times = \{\, d x^u : (d,u) \in R^\times \times \uu(M) \,\}.
\]
Moreover, the torsion-free and cancellative hypotheses ensure that there exists a total order~$\preceq$ on~$M$ such that $(M,\preceq)$ is a linearly ordered monoid, meaning that $c \prec d$ implies $b+c \prec b+d$ for all $b,c,d \in M$. Under this order, the representation of $f$ in \eqref{eq:polynomial expression} is unique with $q_n \succ \dots \succ q_1$, allowing us to define the \emph{degree} and \emph{order} of a nonzero element $f$ by
\[
	\deg f := q_n \quad \text{and} \quad \operatorname{ord} f := q_1.
\]

\smallskip
We conclude this section recalling some terminology about polynomials. Let $f = \sum_{i=0}^n c_i x^i \in R[x]$ be a polynomial with coefficients in $R$. The \emph{content} of $f$, denoted by $\mathfrak{c}(f)$, is the greatest common divisor (up to associates) of the coefficients $c_0, c_1, \dots, c_n$ in $R$, which means that
\[
   \mathfrak{c}(f) \sim \gcd(c_0, c_1, \dots, c_n),
\]
whenever such a greatest common divisor exists. Equivalently, $\mathfrak{c}(f)$ is any nonzero element $d \in R$ (unique up to associates) such that $d \mid_R c_i$ for every $i \in \ldb 0,n \rdb$ and such that the coefficients of $f/d$ have no nonunit common divisor in $R$. A polynomial $f \in R[x]$ is called \emph{primitive} if $\mathfrak{c}(f) \in R^\times$, that is, if the coefficients of $f$ have no nonunit common divisor in~$R$.

\bigskip
%%%%%%%%%%%%%%%%%%%%%%%%%%%%%%%%%%%%%%%%%%%%%%%%%%%%%%%%%%%%%
%%%%%%%%%%%%%%%%%%%%%%%%%%%%%%%%%%%%%%%%%%%%%%%%%%%%%%%%%%%%%
\section{Generalizations of the Finite Factorization Property}
\label{sec:properties weaker than the FF}

In this section, we consider various generalizations of the FF property. First, we compare the three generalizations of the FF property mentioned in the previous section: the BF, IDF, and MCD-finite properties. Then we introduce the main factorization property that we investigate throughout the rest of the paper, the unrestricted finite factorization property.

\medskip
%%%%%%%%%%%%%%%%%%%%%%%%%%%%%%%%%%%%%%%%%%%%%%%%%%%%%
\subsection{Natural Generalizations of the FF Property}

The BF, IDF, and MCD-finite properties are all natural generalizations of the FF property. Indeed, for each property $\mathcal{P}$ weaker than the FF property in Diagram~\ref{fig:three weaker notions of the FF property}, we will provide some examples and construct a torsion-free monoid and an integral domain with the property $\mathcal{P}$ that does not have any of the other two properties. Let us start by taking $\mathcal{P}$ to be the IDF property.
%In this first section, we provide several examples of integral domains satisfying at least one of the three following properties weaker than the FF property: the IDF, BF, and MCD-finite properties.

\smallskip
\subsubsection{The IDF Property}

Every antimatter monoid or integral domain is clearly an IDF domain. Let us exhibit an example of an IDF domain not satisfying the BF property. %or the MCD-finite properties. 

\begin{example}
		Define $M := \nn_0\big[\frac12\big]$, and observe that the monoid algebra $R :=\qq[M]$ is an integral domain with $R^\times = \qq^{\times}$. As $M$ is an antimatter monoid, $R$ is not an atomic domain. Fix a nonzero nonconstant $f \in R$ and let $q$ be the minimum exponent in $\text{supp} \, f$. Write $f = x^q g$ with $g \in R$ having nonzero constant term. Because $q/2 \in M$, we can factor $f = \big(x^{q/2}\big)\big(x^{q/2} g \big)$, and neither factor is a unit. Thus, $R$ has no irreducibles, so $R$ is an IDF domain. However, $R$ is not a BFD because it is not atomic.
\end{example}

In in \cite[page 6]{EK18} the authors mention that $\rr + x\cc[x]$ is an IDF domain that does not satisfy the MCD property. We provide the details in the following examples.

\begin{example}
	Let $R$ be the subring $\rr + x\,\cc[x]$ of $\cc[x]$ consisting of all the polynomials with real constant coefficients. Observe that $R^\times=\rr^\times$. For a pair $(a,b) \in \cc^\times \times \rr$, the linear polynomial $\ell(x) = ax+b$ belongs to $R$. The fact that $\ell(x) = f(x)g(x)$ for some $f(x), g(x) \in R$ guarantees that $\deg f(x) + \deg g(x) = 1$, so one factor is a unit in $R$. Hence every nonzero linear polynomial of $R$ is irreducible. 
    
    It turns out that $R$ is an IDF domain that is not an MCD-finite domain. In order to argue that $R$ is an IDF domain, let $h(x)$ be a nonzero polynomial of $R$. In $\cc[x]$ we can write
	\[
		h = c \prod_{j=1}^k (x-\rho_j)^{m_j}
	\]
	for some $c \in \cc^\times$ and $\rho_1, \dots, \rho_k \in \cc$. If for some pair $(a,b) \in \cc^\times \times \rr$, the linear irreducible polynomial $\ell(x) = a x+b$ divides $h(x)$ in $R$, then it divides $h(x)$ in $\cc[x]$. Hence $-b/a =\rho_j$ for some $j$. For each fixed root $\rho_j$ of $h(x)$, all such $\ell(x)$ are of the form $c_j(x - \rho_j)$ for some $c_j \in \rr^\times$. Since $h(x)$ has only finitely many distinct roots, $h(x)$ is divisible by only finitely many irreducibles up to associates. Thus, $R$ is an IDF domain.
	
	\smallskip
	Finally, we argue that $R$ si not an MCD-finite domain. To do so, consider the family of linear irreducible polynomials $\{ \ell_a(x) := a x+1 : a\in \cc^\times \}$. For each $a \in \cc^\times$ one can write 
	\[
		q_a(x) := \ell_a(x) \overline{\ell_a}(x) = |a|^2 x^2 + 2 \Re(a) x + 1 \in R .
	\]
	One can now fix a finite set $S$ of $R$ for which the set of MCDs consists of infinitely many pairwise non-associate $q_a$’s (the coefficients $|a|^2$ and $\Re(a)$ can be varied independently while keeping constant term $1$, yielding infinitely many incomparable common divisors). Therefore $R$ is not an MCD-finite domain.
    \hfill $\blacksquare$
\end{example}

As the following proposition indicates, there exist IDF domains that have neither the BF nor the MCD-finite properties.

\begin{prop} \label{prop:IDF domain neither BFD nor MCD-finite}
    There exists an IDF domain that is neither a BFD nor an MCD-finite domain.
\end{prop}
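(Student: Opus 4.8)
The plan is to build an IDF domain by combining the two features illustrated in the preceding examples: the failure of the BF property (coming from an antimatter-type or infinitely-divisible phenomenon) and the failure of MCD-finiteness (coming from a domain like $\rr + x\cc[x]$ whose linear polynomials are atoms yet admit infinitely many non-associate common divisors of a fixed finite set). The natural candidate is a monoid algebra of the form $R := (\rr + x\cc[x])[M]$ where $M$ is a nontrivial antimatter monoid such as the Puiseux monoid $\nn_0[\tfrac12]$, or more simply the Grothendieck group $\qq$ itself viewed additively; alternatively one can take $R := (\rr + x\cc[x])[y;\qq_{\ge 0}]$. Either way, the idea is that adjoining a divisible exponent monoid kills atomicity (hence the BF property fails, since a BFD must be atomic), while the ground ring $\rr + x\cc[x]$ survives inside as a divisor-closed subring and drags along its failure of MCD-finiteness.

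First I would set $T := \rr + x\cc[x]$ and recall from the previous example that $T^\times = \rr^\times$, that every nonzero linear polynomial $ax + b$ with $a \in \cc^\times$, $b \in \rr$ is an atom of $T$, that $T$ is IDF, and that $T$ is not MCD-finite (witnessed by the family $q_a(x) = |a|^2 x^2 + 2\Re(a)x + 1$). Next I would define $R := T[y; \qq_{\ge 0}]$, the monoid algebra over the monoid $M = \qq_{\ge 0}$; since $T$ is a domain and $M$ is torsion-free, $R$ is a domain with $R^\times = T^\times = \rr^\times$ (because $\uu(M)$ is trivial). The failure of the BF property is then immediate: the element $y^1 \in R$ has, for every $n \in \nn$, the factorization-free decomposition $y = (y^{1/2^n})^{2^n}$ into $2^n$ nonunit factors that are not themselves units, so no power of $y$ admits a bounded factorization; in fact $R$ is not atomic, hence not a BFD. (If one prefers, one can simply note $M = \qq_{\ge 0}$ is antimatter, so $R$ has elements that are not atomic, precluding the BF property.)

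The substantive part is showing that $R$ retains both the IDF property and the failure of MCD-finiteness. For MCD-finiteness: the copy of $T$ sitting inside $R$ as the degree-zero-in-$y$ subring is divisor-closed — a divisor in $R$ of an element of $T$ must have $y$-support $\{0\}$ by looking at orders and degrees in the $y$-grading, so it lies in $T$ — hence the finite set $S \subseteq T$ witnessing non-MCD-finiteness of $T$ also witnesses it in $R$, because the common divisors of $S$ in $R$ coincide with those in $T$ and the $q_a$ remain pairwise non-associate in $R$ (associates in $R$ differ by a unit, and $R^\times = T^\times$). For the IDF property, the key observation is that $R = \bigcup_{n} T[y^{1/n!}] = \bigcup_n T[z_n]$ where each $T[z_n] \cong T[z]$ is a polynomial ring in one variable over $T$; one shows $T[z]$ is IDF (using that $T$ is IDF together with a content/Gauss-type argument — any atom of $T[z]$ dividing $f$ either lies in $T$, where finiteness follows from IDF-ness of $T$, or has positive $z$-degree, where it divides $f$ in $\cc[x][z]$, a UFD, forcing finitely many possibilities up to the finitely many associate classes over $T$), and then argues that an atom of $R$ dividing a fixed $f \in R$ already lives in some $T[z_n]$ with $z_n$-degree bounded by the $z_n$-degree of $f$, so only finitely many arise up to associates.

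The hard part will be the IDF argument for $R$ itself rather than for the truncations $T[z_n]$: one must rule out the possibility that $f$ has infinitely many non-associate atomic divisors of ever-finer fractional $y$-degree. The control comes from the fact that any divisor $g$ of a fixed $f$ has $y$-support contained in the (finite) difference set generated by $\supp f$ under the monoid order, so $g \in T[y^{1/N}]$ for some $N$ depending only on $f$; thus all atomic divisors of $f$ lie in the single polynomial-type ring $T[y^{1/N}]$, reducing the claim to the truncated case. I would also double-check the subtle point that atoms of the subring $T[y^{1/N}]$ remain atoms of $R$ — this needs that $T[y^{1/N}]$ is divisor-closed in $R$, which again follows by the $y$-grading/degree bookkeeping, since a proper factorization in $R$ of an element of $T[y^{1/N}]$ would have both factors of $y$-degree a multiple of $1/N$, hence already in $T[y^{1/N}]$. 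Once these grading lemmas are in place, the proposition follows by assembling the three verified facts: $R$ is IDF, $R$ is not a BFD (not atomic), and $R$ is not MCD-finite.
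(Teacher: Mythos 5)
Your route is entirely different from the paper's: the paper's proof is a short non-constructive argument (take, via \cite{MO09}, an IDF domain $R$ with $R[x]$ not IDF; if $R$ were a BFD it would be an atomic IDF domain, hence an FFD, and the FF property ascends to $R[x]$, forcing $R[x]$ to be IDF; if $R$ were MCD-finite, the ascent theorem of \cite{EK18} would likewise force $R[x]$ to be IDF). Your explicit construction is closer in spirit to the paper's concrete Example~\ref{ex:a IDF that is neither BF nor MCD-finite}, but it has a fatal gap exactly at the step you yourself flag as the hard part: showing that $R=T[y;\qq_{\ge 0}]$ with $T=\rr+x\cc[x]$ is IDF. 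Your intermediate claim that $T[z]$ is IDF is false, and this is not a repairable detail, because the failure of IDF to ascend to polynomial rings is precisely the MCD phenomenon you are deliberately importing into $R$. Concretely, $\{x^2,\, ix^2\}$ has the pairwise non-associate MCDs $cx$ for $c\in\cc^\times$ (two are associate only when the scalars differ by an element of $\rr^\times=T^\times$), and for each such $c$ the element $c^{-1}x+ic^{-1}xz$ is an irreducible divisor of $x^2+ix^2z$ in $T[z]$: it is linear in $z$, so any factorization has a factor in $T$ dividing both coefficients, which must be a unit since $cx$ is an MCD. These divisors are pairwise non-associate, so $T[z]$ is not IDF. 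Your Gauss-type argument fails because a single associate class of irreducibles in $\cc[x][z]$ meets $T[z]$ in infinitely many $T[z]$-associate classes ($\cc^\times$ versus $\rr^\times$), so ``finitely many possibilities up to associates'' over the UFD does not bound the count over $T$.

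The two grading lemmas you invoke to reduce to the truncations are also false: $T[y^{1/N}]$ is not divisor-closed in $R$ (already $y^{1/N}=y^{1/2N}\cdot y^{1/2N}$), so atoms of a truncation need not remain atoms of $R$; and a divisor of a fixed $f$ need not lie in any single $T[y^{1/N}]$ (the divisors of $y$ alone include every $y^{q}$ with $0\le q\le 1$). The parts of your argument that do work are the easy ones: $R$ is not atomic (hence not a BFD) and $T$ is divisor-closed in $R$ with $R^\times=T^\times$, so $R$ inherits the failure of MCD-finiteness. The paper's own explicit construction sidesteps the IDF difficulty by adjoining $p$-th roots of every generator so that the resulting domain has no atoms at all, making IDF vacuous; your $R$ instead contains the atomic domain $T$ as a divisor-closed subring together with polynomial-type extensions of it, which is exactly the configuration in which IDF is expected to fail. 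To fix the proof, either make the domain antimatter as in Example~\ref{ex:a IDF that is neither BF nor MCD-finite}, or abandon the construction and run the paper's ascent argument from \cite{MO09} and \cite{EK18}.
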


\begin{proof}
    Let $R$ be an IDF domain such that $R[x]$ is not an IDF domain, which must exists by \cite{MO09}. We claim that $R$ does not have either the BF or the MCD-finite property. If $R$ had the BF property, then it would be an atomic IDF domain and so an FFD, whence $R[x]$ would also be an FFD because the FF property ascends to polynomial extensions. On the other hand, if $R$ had the MCD-finite property, then $R[x]$ would be an IDF domain (as it follows from~\cite{EK18} that the IDF ascends to polynomial extensions over the class of MCD-finite domains).
\end{proof}

%It turns out that there are torsion-free finite-rank monoids that are IDF but do not have neither the BF property nor the MCD-finite property.

Here we discuss a concrete construction of an IDF domain that has neither the BF nor the MCD-finite properties. 

\begin{example} \label{ex:a IDF that is neither BF nor MCD-finite}
    Let $x, y_1, y_2, y_3$ be pairwise distinct indeterminates, and let $(z_n)_{n \ge 1}$ be an infinite sequence of pairwise distinct indeterminates whose underlying set is disjoint from $\{ x, y_1, y_2, y_3 \}$. Now fix a prime $p \in \pp$ and then consider the rank-$1$ valuation additive monoid $M := \mathbb N_0 \big[\frac1p\big]$, whose Grothendieck group is $G := \zz\big[\frac1p \big]$. Set $A := G^4 \times G^{(\nn)}$, and let $\ff_p[X;A]$ denote the monoid algebra whose monomials have the form $X^s := x^{q_0} y_1^{q_1} y_2^{q_2} y_3^{q_3} \prod_{n \in \nn} z_n^{r_n}$ for some $s := ((q_0, q_1, q_2, q_3), (r_n)_{n \ge 1}) \in A$. We claim that the subring
    \[
        R := \mathbb F_p \left[x^q, z_n^q, \left(\frac{xy_1}{z_n}\right)^q, \left(\frac{xy_2}{z_n}\right)^q, \left(\frac{xy_3}{z_n}\right)^q \, : \, (n,q) \in \nn \times M \right]
    \]
    of $\ff_p[X;A]$ is an IDF domain that is neither a BFD nor an MCD-finite domain. First, observe that~$R$ contains no atoms because every element has a $p$-th root. As a result, $R$ is trivially an IDF domain that is not a BFD. Thus, we only need to argue that $R$ is not an MCD-finite domain. As the abelian group~$A$ is torsion-free, $\ff_p[X;A]$ is an integral domain and the units of $\ff_p[X;A]$ are precisely their nonzero monomials. As the $x$-valuation (or $y_1$-valuation, $y_2$-valuation, $y_3$-valuation) of any nonzero monomial of~$R$ is nonnegative, every unit $u$ in $R$ must be a monomial of the form $u := \alpha \prod_{n \in \nn} z_n^{q_n} \in \ff_p[M]$ and so the fact that $M$ is a reduced monoid guarantees that $q_n = 0$ for every $n \in \nn$, which means that $R^\times = \ff_p^\times$. Note that any two distinct elements in the set $\{z_n : n \in \nn \}$ are algebraically independent over $\ff_p$ and so non-associates in~$R$. Thus, in order to show that~$R$ is not an MCD-finite domain, it suffices to consider the nonempty finite subset
    \[
        S := \big\{ xy_1 + xy_3, \ xy_2+xy_3 \}.
    \]
    of $R$ and argue that $z_n$ is an MCD of $S$ for every $n \in \nn$. To show this, we must demonstrate that, for any fixed index $n \in \nn$, the only common factors in~$R$ of the subset $S_n := \{a_n, b_n\}$ are those in $\ff_p^\times$, where
    \[
        a_n := x\frac{y_1}{z_n} + x\frac{y_3}{z_n} \quad \text{ and } \quad b_n := x\frac{y_2}{z_n} + x\frac{y_3}{z_n}.
    \]
    Suppose, by way of contradiction, that $S_n$ has a nonunit common divisor in~$R$. Since $a_n$ and $b_n$ are homogeneous polynomials in $x$, any common factor of $S_n$ must be homogeneous in~$x$. Similarly, any common factor of $S_n$ must be homogeneous in~$z_n$ for every $n \in \nn$. Furthermore, because $a_n$ contains no power of~$y_2$, the common factors of $S_n$ cannot contain any power of~$y_2$ and, analogously, the common factors of $S_n$ cannot contain any power of~$y_1$. Since $a_n - b_n$ contains no power of~$y_3$, the $y_3$-valuation of each term of any common factors of $S_n$ in $R$ is zero. 
    %Finally, it is clear that the common factors of $S_n$ cannot contain any positive power of~$z_m$ for any $m \in \nn$ with $m \ne n$. Therefore a nonzero common factor of $S_n$ in $R$ must have the form $z_i^q x^r$ for some $q,r \in M$ {\color{red} actually, it may have $z_j$ with $j \neq n$, but it fails for the same reason as below. %However, the notation will have to be more complicated to accommodate this case.}. 
    In a similar way, one can argue that for any $m \in \nn$ with $m \neq n$, the common factor of $S_n$ does not contain a term with positive $z_m$-valuation. Therefore, any common factor of $S_n$ must be of the form $rz_n^q x^r$ for $r \in \ff_p^\times$ and $q, r \in M$. However, it is obvious that $z_n^q x^r \nmid_R \frac{y_1}{z_n}x$ if $\max\{q,r\} > 0$. Thus, the only common divisors of~$S_n$ in~$R$ are those in $\ff_p^\times$, which are precisely the units of $R$. As a result, we conclude that~$R$ is not an MCD-finite domain, as desired.
    \hfill $\blacksquare$
%^\times$. Thus, we conclude that $R$ is not MCD-finite.
\end{example}

% Ideally, we should be able to construct an antimatter Puiseux monoid that is not MCD-finite.

% \begin{question}
%     Is there an antimatter Puiseux monoid that is not an MCD-finite?
% \end{question}

\medskip
%%%%%%%%%%%%%%%%%%%%%%%%%%%%%%
\subsubsection{The BF Property} In the following example, we exhibit a one-dimensional monoid algebra over a field that is a BFD but is neither an IDF domain nor an MCD-finite domain. 

\begin{example} \label{ex:Grams' monoid algebra is BF but neither IDF nor MCD-finite}
	Consider the submonoid $M := \{0\} \cup \qq_{\ge 1}$ of the additive monoid $\qq$. Observe that $M$ is a reduced rank-$1$ monoid with $\mathcal{A}(M) = [1,2) \cap \qq$. In addition, it follows from~\cite[Proposition~4.5]{fG19} that~$M$ has the BF property. To argue that $M$ does not have the IDF property, first observe that for each element $q \in M_{\ge 2}$ and $a \in \mathcal{A}(M)$, the inequality $q-a \ge 1$ holds and so $q-a \in M$ or, equivalently, $a \mid_M q$. Hence every element of the ideal $M_{\ge 2}$ is divisible by all the atoms. As $|\mathcal{A}(M)| = \infty$, we conclude that $M$ does not have IDF property.
    
    Now we argue that $M$ does not have the MCD-finite property. To do so, fix $q,r \in M$ such that $2 < q < r$. Let us argue that the subset $S := \{q,r\}$ of $M$ has infinitely many MCDs (and so it does not have any GCD). Observe that every element in the subset $D_S := [1, q-1] \cap \qq$ of $M$ is a common divisor of $S$: indeed, for each $d \in D_S$, we see that $q-d \ge q - (q-1) = 1$ and so $r-d > q-d \ge 1$, whence $d \mid_M q$ and $d \mid_M r$. Now fix $\epsilon \in \rr$ with $0 < \epsilon < q-2$. Because $1 < (q-1) - \epsilon < q-1$, it follows that the infinite subset $M_S := [(q-1)-\epsilon, q-1) \cap \qq$ of $M$ is contained in $D_S$, and so each $m \in M_S$ is a common divisor of $S$. On the other hand, for each $m \in M_S$ and $q' \in M \setminus \{0\}$, the inequalities $m + q' \ge m + 1 > q$ hold and so $m+q' \nmid_M q$, whence $m$ is an MCD of $S$. Hence every element in $M_S$ is an MCD of $S$, which implies that $S$ has infinitely many non-associate MCDs in $M$. As a consequence, $M$ does not have the MCD-finite property. 
    
    Finally, fix a field $F$ and consider the monoid algebra $F[M]$, which has dimension $1$ because $M$ has rank~$1$. The monoid $M$ is clearly a BFM, so it follows that $F[M]$ is a BFD. Now observe that $F^\times x^M$ is a divisor-closed submonoid of $R[M]^*$, and so the fact the reduced monoid of $F^\times x^M$ is isomorphic to $M$ ensures that $F^\times x^M$ does not have neither the IDF property nor the MCD-finite property. Therefore the fact that $F^\times x^M$ is a divisor-closed submonoid of $F[M]^*$ implies that $F[M]$ does not have neither the IDF property nor the MCD-finite property.
    \hfill $\blacksquare$
\end{example}

% See the following example from Malcolmson and Okoh.
% \begin{example}
%     Consider the quotient ring $R = K[x,y]/(y^2 - (x^4-x^3))$, where $\text{char} \, K = 0$. Since $R$ is Noetherian, it must be atomic. The equation $(y/x)^2 = x^2 - x$ shows that $R'$ contains $y/x$. Let $p = 2(y/x) + 2x - 1$. Then $p$ and $p^{-1} = -2(y/x) + 2x - 1$ both belong to $R'$. Therefore $R' = K[p,p^{-1}]$ and $|U(R')/U(R)| = \infty$. Thus, it follows from \cite[Theorem~1.2]{AM96} is $R$ is not an IDF. 
% \end{example}

\smallskip
\subsubsection{The MCD-finite Property} Let us now discuss an example of a rank-$1$ torsion-free MCD-finite monoid that is neither a BFM nor an IDF monoid.

\begin{example} \label{ex:a PM that is MCD-finite but neither BF nor IDF}
	Let $(p_n)_{n \ge 1}$ be a strictly increasing sequence consisting of primes in $\pp_{\ge 3}$, and consider the submonoid
	\[
		G := \Big\langle \frac1{2^n p_n} : n \in \nn \Big\rangle.
	\]
	The monoid $G$ is often referred to as \emph{Grams' monoid}, as it first appeared in~\cite{aG74} as the monoid of exponents for a monoid algebra---used to construct the first example of an atomic domain not satisfying the ACCP. It was proved in~\cite[Theorem~4.7]{LWZ24} that $G$ has the MCD-finite property.
    
	In order to verify that $G$ has neither the IDF property nor the BF property, first we need to verify that the set of atoms of $G$ are precisely the defining generators, that is,
    \[
        \mathcal{A}(G) = \Big\{ \frac1{2^n p_n} : n \in \nn \Big\}.
    \]
    Finally, we observe that, for each $n \in \nn$, we can write $1$ in $G$ as the sum of $2^n p_n$ copies of the atom $\frac{1}{2^n p_n}$, which implies that $1$ has both infinite length and infinite irreducible divisors. Hence $M$ is an MCD-finite monoid is neither a BFM nor an IDF monoid.
    \hfill $\blacksquare$
\end{example}

% It would be interesting to figure it out whether the monoid algebra $\qq[M]$ is an MCD-finite domain when $M$ is the Grams' monoid.

% \begin{question}
%     For the Grams' monoid $M$, is $\qq[M]$ an MCD-finite domain.
% \end{question}
%
% At this point, we have verified that no two of the three properties weaker than the FF property imply the third one. Still it would be interesting to answer the following question.

% \begin{question}
% 	Is there a rank-one monoid (i.e., a Puiseux monoid) with the IDF property having neither the BF property nor the MCD-finite property?
% \end{question}

The examples we have exhibited earlier guarantee, in particular, that the (black) unbroken implication arrows shown in Diagram~\ref{fig:three weaker notions of the FF property} are the only implication relation between the four properties considered so far in this section.
\begin{center}
	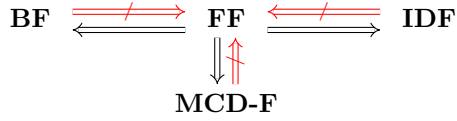
\begin{figure}[h]
		\begin{tikzcd} %[cramped]
				\textbf{ BF }     \arrow[r, Leftarrow, shift right=0.8ex] \arrow[red, r, Rightarrow, "/"{anchor=center,sloped}, shift left=0.8ex]
									 & \textbf{ FF }  \arrow[r, Rightarrow, shift right=0.8ex] \arrow[red,  r,  Leftarrow, "/"{anchor=center,sloped}, shift left=0.8ex] 
									                           \arrow[d, Rightarrow, shift right=0.8ex] \arrow[red, d, Leftarrow, "/"{anchor=center,sloped}, shift left=0.8ex] 
									 & \textbf{ IDF } \\
			\phantom{ NA } & \  \  \textbf{ MCD-F } \  \ & \phantom{ NA}
		\end{tikzcd}
		\caption{The implications in the diagram show three properties implied by the FF property: the BF property, the IDF property, and and the MCD-finite property. The (red) marked arrows emphasize that none of the shown implications are reversible.}
		\label{fig:three weaker notions of the FF property}
	\end{figure}
\end{center}

\medskip
%%%%%%%%%%%%%%%%%%%%%%%%%%%%%%%%%%%%%%%%%%%%%%%%%%%%%%%%%%%
\subsection{The Unrestricted Finite Factorization Property}

Let $R$ be an integral domain. Following Coykendall and Zafrullah~\cite{CZ04}, we say that $R$ is an \emph{unrestricted unique factorization domain} (U-UFD) or has the \emph{unrestricted unique factorization} (U-UF) property if $|\mathsf{Z}(r)| \le 1$ for all $r \in R^*$ (i.e., every atomic element of $R$ has a unique factorization). In the same paper, the authors proved that the class consisting of all U-UFDs strictly contains the class of all AP domains (i.e., integral domains where every atom is prime).
\smallskip

Motivated by the U-UF property, we proceed to introduce a weaker version of the FF property, which will play the central role in this paper.

\begin{defn}
    We say that a monoid is an \emph{unrestricted finite factorization monoid} (U-FFM) or has the \emph{U-FF} property if every atomic element has only finitely many factorizations, in which case we say that the monoid. We say that an integral domain is an \emph{unrestricted finite factorization domain} (U-FFD) or has the \emph{U-FF} property if its multiplicative monoid is a U-FFM.
\end{defn}

It is clear that the U-FF property is an natural generalization of the FF and U-UF properties. This definition of a U-FFM may be rephrased in two equivalent intuitive ways, as indicated in the following proposition.

\begin{prop}
	For a monoid $M$, the following statements are equivalent.
	\begin{enumerate}
		\item[(a)] $M$ is a U-FFM.
		\smallskip
		
		\item[(b)] Every element of $M$ has finitely many factorizations.
		\smallskip
		
		\item[(c)] The atomic submonoid of $M$ is an FFM.
	\end{enumerate}
\end{prop}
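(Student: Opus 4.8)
The plan is to prove the cycle of implications (a) $\Rightarrow$ (b) $\Rightarrow$ (c) $\Rightarrow$ (a), since none of these is entirely trivial once we unpack what "factorization" means. Recall that $\mathsf{Z}(b) = \pi^{-1}(b)$ where $\pi$ runs over the free commutative monoid on the atoms of $M/\uu(M)$; in particular $\mathsf{Z}(b) = \emptyset$ precisely when $b$ is not atomic. So the only real content that separates (a) from (b) is the \emph{non-atomic} elements: statement (a) quantifies over atomic elements (where $|\mathsf{Z}(b)| \ge 1$), while (b) quantifies over all elements. But for a non-atomic $b$ we have $|\mathsf{Z}(b)| = 0 < \infty$, so the condition "$\mathsf{Z}(b)$ is finite" is automatic. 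Hence (a) and (b) say the same thing, and I would simply note that the implication (b) $\Rightarrow$ (a) is immediate (atomic elements are a subclass), while (a) $\Rightarrow$ (b) follows because any non-atomic element has empty, hence finite, factorization set.

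For (a) $\Leftrightarrow$ (c): let $N := A(M)$ denote the atomic submonoid of $M$. The key point I would establish is that factorizations are computed the same way inside $N$ as inside $M$ — more precisely, that $\mathcal{A}(N/\uu(N)) $ is naturally identified with $\mathcal{A}(M/\uu(M))$ and that for an atomic $b \in M$ the set $\mathsf{Z}_N(b)$ of factorizations of $b$ in $N$ coincides (under this identification) with $\mathsf{Z}_M(b)$. This reduces to two sub-claims: first, every atom of $M$ lies in $N$ (trivially, since an atom $a$ is a product of one irreducible, so it is atomic) and remains an atom of $N$ (if $a = uv$ in $N$ then this is an equality in $M$, forcing $u$ or $v$ to be a unit of $M$; one then checks units of $M$ that lie in $N$ are units of $N$, and conversely, so $\uu(N) = \uu(M) \cap N$); second, conversely, every atom of $N$ is an atom of $M$ (if $a \in N$ factors nontrivially as $a = uv$ in $M$, then since $a$ is atomic and $M$ is cancellative, both $u$ and $v$ divide an atomic element, hence are themselves atomic — here one uses that divisors of atomic elements are atomic, which follows by uniquely splitting a factorization of $a$ — so $u, v \in N$, contradicting irreducibility of $a$ in $N$). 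Given this identification, a factorization of an atomic $b$ in $M$ is literally the same datum as a factorization of $b$ in $N$, so $|\mathsf{Z}_M(b)| = |\mathsf{Z}_N(b)|$ for all $b \in N$, and since every element of $N$ is atomic and every atomic element of $M$ lies in $N$, statement (a) (finitely many factorizations for every atomic $b \in M$) is exactly the statement that $1 \le |\mathsf{Z}_N(b)| < \infty$ for every $b \in N$, i.e.\ that $N$ is an FFM, which is (c).

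The step I expect to require the most care is verifying that divisors (in $M$) of an atomic element of $M$ are again atomic, and the attendant bookkeeping with units — i.e.\ that $\uu(M) \cap N = \uu(N)$ and that the atom sets of $M/\uu(M)$ and $N/\uu(N)$ genuinely match up. The divisor claim is standard but deserves an explicit argument: if $b = uv$ with $b$ atomic, write $b = a_1 \cdots a_k$ with each $a_i \in \mathcal{A}(M)$; if $k = 0$ then $b$ is a unit and so are $u, v$; otherwise, I would induct on $k$, using that $a_1$ is prime-like only in the UF case, so instead I would argue more carefully — actually the cleanest route is: the submonoid $N = A(M)$ is divisor-closed in $M$. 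To see this, suppose $a \in N$ and $c \mid_M a$, say $a = cd$. Since $a$ is atomic, $a = a_1\cdots a_k$ with $a_i$ atoms; I claim $c$ and $d$ are atomic. This is exactly the assertion that being atomic is preserved under taking divisors, and the customary proof works by noting that in $\mathsf{Z}(a)$ one may not be able to split the factorization along the product $cd$ directly, so one instead appeals to the fact that $N$ being divisor-closed is equivalent to: whenever a product $cd$ is atomic, both factors are atomic — which I would either cite from the literature on atomic submonoids or prove by the standard refinement argument. Once divisor-closedness of $N$ in $M$ is in hand, everything else (the unit identifications, the atom-set identification, and the equality of factorization sets) follows routinely, and the proposition is proved.
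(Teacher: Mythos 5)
Your reduction of (a) to (b) is correct, and your overall strategy for (a) $\Leftrightarrow$ (c) --- identify $\uu(A(M))$ with $\uu(M)$, identify $\mathcal{A}(A(M))$ with $\mathcal{A}(M)$, and conclude $\mathsf{Z}_{A(M)}(b) = \mathsf{Z}_M(b)$ for atomic $b$ --- is the right one (the paper states this proposition without proof and relies on exactly this identification of factorization sets in its proof of Proposition~\ref{prop:IDF are U-FF}). However, the lemma you make load-bearing, namely that the atomic submonoid $N := A(M)$ is divisor-closed in $M$ (equivalently, that every divisor of an atomic element is atomic), is false, and no ``standard refinement argument'' or literature citation will rescue it: a factorization of $a = cd$ into atoms need not refine the decomposition $cd$. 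Concretely, if $A(M)$ were always divisor-closed, then every nearly atomic monoid would be atomic, since near atomicity provides $c$ with $cM \subseteq A(M)$ while $b \mid_M cb$ for every $b$; but Section~\ref{sec:a characterization of FFDs via near atomicity} of the paper exhibits a nearly atomic domain that is not atomic. In its underlying monoid $M = (\nn_0 \times \nn_0) \cup (\zz \times \nn_{\ge 2})$ one has explicitly $(-1,2) + (1,0) = (0,2)$, where $(0,2)$ is atomic but $(-1,2)$ is not (every atom of that monoid lies in $\nn_0 \times \nn_0$, so sums of atoms have nonnegative first coordinate).

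Fortunately, the only place you invoke divisor-closedness --- showing that every atom of $N$ is an atom of $M$ --- admits a direct argument that bypasses it. First, $\uu(N) = \uu(M)$: units of $M$ are atomic by definition, and the inverse of a unit is a unit, hence atomic, so $\uu(M) \subseteq \uu(N)$, while the reverse inclusion is immediate. Now a nonunit $a \in N$ can be written $a = a_1 \cdots a_k$ with $k \ge 1$ and each $a_i \in \mathcal{A}(M) \subseteq N$ a nonunit of $N$; if $k \ge 2$ this exhibits $a$ as a product of two nonunits of $N$, so an atom of $N$ forces $k = 1$, i.e., $a \in \mathcal{A}(M)$. Combined with your (correct) observation that atoms of $M$ remain atoms of $N$, this gives $\mathcal{A}(N) = \mathcal{A}(M)$, the identification $\mathsf{Z}_N(b) = \mathsf{Z}_M(b)$ for all $b \in N$ goes through, and the proposition follows. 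You do not need --- and must not claim --- that atomic elements have only atomic divisors.
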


As a U-FFM can be described as a monoid whose elements have only finitely many factorizations (possibly zero), we posit that the U-FF property could be even more natural and intuitive than the FF property.

Next, we show that the U-FF property is not only a generalization of the FF property, but also a generalization of the IDF property.

\begin{prop} \label{prop:IDF are U-FF}
	If a monoid/domain has the IDF property, then it also have the U-FF property.
\end{prop}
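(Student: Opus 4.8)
The plan is to show that if $M$ has the IDF property, then its atomic submonoid $A(M)$ is an FFM, which by Proposition~(c) above is equivalent to $M$ being a U-FFM. Since $A(M)$ is atomic by construction, it suffices (by the quoted theorem \cite[Theorem~2]{fHK92}, that a monoid is an FFM iff it is an atomic IDF monoid) to prove that $A(M)$ inherits the IDF property from $M$. The key point to check is that the atoms of $A(M)$ are exactly the atoms of $M$ that happen to lie in $A(M)$ — and in fact every atom of $M$ lies in $A(M)$ trivially, since an atom is a (length-one) product of atoms. So $\mathcal{A}(A(M)) = \mathcal{A}(M)$, after accounting for the units, which are the same in both monoids since $A(M)$ contains $\uu(M)$ and no unit of $M$ can become a nonunit in a submonoid.

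First I would fix $a \in A(M)$ and consider its irreducible divisors in $A(M)$, i.e., the atoms $q \in \mathcal{A}(A(M))$ with $q \mid_{A(M)} a$. Each such $q$ satisfies $q \mid_M a$ as well, so $q$ is an atom of $M$ dividing $a$ in $M$. Since $M$ has the IDF property, there are only finitely many such $q$ up to associates in $M$. The remaining subtlety is that "associates in $A(M)$" could a priori be a finer equivalence relation than "associates in $M$" — but because $\uu(A(M)) = \uu(M)$ (as noted above), the two associate relations coincide on $A(M)$. Hence $a$ has only finitely many irreducible divisors in $A(M)$ up to associates, so $A(M)$ is an IDF monoid.

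The main (and essentially only) obstacle is the bookkeeping around atoms and associates passing between $M$ and its submonoid $A(M)$: one must verify that divisibility by an atom of $A(M)$ within $A(M)$ is genuinely witnessed by divisibility in $M$ (immediate, since $A(M) \subseteq M$), that every atom of $M$ is already atomic hence lies in $A(M)$, and that $\uu(A(M)) = \uu(M)$ so no spurious splitting of associate classes occurs. Once these are in place, the result follows by combining the inherited IDF property of $A(M)$ with its built-in atomicity via \cite[Theorem~2]{fHK92}. For the integral domain case, one simply applies the monoid statement to the multiplicative monoid $R^*$, whose atomic submonoid is the multiplicative monoid of the atomic subring of $R$.
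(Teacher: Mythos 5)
Your proposal is correct and follows essentially the same route as the paper: both arguments reduce the claim to showing that the atomic submonoid $A(M)$ is an atomic IDF monoid, invoke the Halter-Koch equivalence to conclude it is an FFM, and then transfer the finiteness of factorizations back to $M$ (the paper does this by noting $\mathsf{Z}_M(a)=\mathsf{Z}_{A(M)}(a)$, you by citing the equivalence (a)$\Leftrightarrow$(c)). The one loose end is that you only justify the containment $\mathcal{A}(M)\subseteq A(M)$, whereas the inclusion your argument actually uses is $\mathcal{A}(A(M))\subseteq\mathcal{A}(M)$; this holds because any nonunit of $A(M)$ that is reducible in $M$ is a product of two or more atoms of $M$, each of which lies in $A(M)$, so it is already reducible in $A(M)$.
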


\begin{proof}
    It suffices to prove the statement of the proposition for monoids. Moreover, given that any monoid has the IDF property (resp., U-FF property) if and only if its reduced monoid has the IDF property (resp., U-FF property), it is enough to restrict attention to the class consisting of all reduced monoids. With this in mind, let $M$ be a reduced IDF monoid, and let $A$ denote the atomic submonoid of $M$. Note that $A$ is an atomic IDF monoid and so an FFM. We are done once we argue that every element of $A$ has only finitely many factorizations in $M$: indeed, for each $a \in A$, the sets of factorizations $\mathsf{Z}_M(a)$ and $\mathsf{Z}_A(a)$ are the same, and from the fact that $A$ is an FFM we deduce that $|\mathsf{Z}_M(a)| < \infty$. Hence every atomic element of $M$ has finitely many factorizations, which means that~$M$ is a U-FFM.
\end{proof}

As the following example illustrates, the converse of Proposition~\ref{prop:IDF are U-FF} does not hold even inside the class of rank-$1$ torsion-free monoids.

\begin{example}
    Let $(p_n)_{n \ge 1}$ be the strictly increasing sequence whose underlying set is the set of primes, and first consider the Puiseux monoid
    \[
        S := \bigg\langle \frac{p_n+1}{p_n^2} : n \in \nn \bigg\rangle.
    \]
    It is routine to show that $\mathcal{A}(S) = \big\{ a_n : n \in \nn \big\}$, where $a_n := \frac{p_n+1}{p_n^2}$ for every $n \in \nn$. Hence $S$ is atomic. Now fix $q \in S$ and take $m \in \nn$ such that $m > \max\{q, \mathsf{d}(q) \}$. Let us show that $a_n \nmid_S q$ for any $n \ge m$. Observe that if $a_n \mid_S q$ for some $n \ge m$, then we can write
    \begin{equation} \label{eq:equality temp}
        q = \sum_{n=1}^\ell c_n \frac{p_n+1}{p_n^2}
    \end{equation}
    for some index $\ell \in \nn_{\ge m}$ and coefficients $c_1, \dots, c_\ell \in \nn_0$ with $c_\ell \ge 1$. Therefore, after applying the $p_\ell$-adic valuation to both sides of~\eqref{eq:equality temp}, we find that $p_\ell^2 \mid c_\ell$, which implies that
    \[
        p_\ell + 1 \le c_\ell \frac{p_\ell + 1}{p_\ell^2} = q - \sum_{n=1}^{\ell-1} c_n \frac{p_n + 1}{p_n^2} \le q.
    \]
    However, this is not possible because $p_\ell \ge p_m > m \ge q$, whence the atoms dividing $q$ in $S$ belong to the finite set $\{a_1, \dots, a_m \}$. Thus, $S$ is an IFD monoid and so an FFM. Now, set
    \[
        M := S \cup \qq_{\ge 1},
    \]
    and note that $M$ is a submonoid of $\qq_{\ge 0}$ because $S$ is a submonoid of $\qq_{\ge 0}$ and $\qq_{> 1}$ is an ideal of $\qq_{\ge 0}$. As the set $\mathcal{A}(M)$ is upper bounded by~$1$, none of the elements in $\qq_{> 1}$ can divide in $M$ any of the elements in the set $\mathcal{A}(S)$, and so $\mathcal{A}(S) \subseteq \mathcal{A}(M)$. The reverse inclusion also holds as, for any $r \in \qq_{>0}$, we can take $n \in \nn$ large enough so that $a_n \mid_M r$. Therefore $\mathcal{A}(M) = \{a_n : n \in \nn \}$, and so the atomic monoid of $M$ is $\langle a_n : n \in \nn \rangle$, which is an FFM. Hence $M$ has the U-FF property. However, $a_n \mid_M 2$ for all $n \in \nn$, whence $M$ does not have the IDF property.
    \hfill $\blacksquare$
\end{example}

The following diagram, which is an extension of that shown in Figure~\ref{fig:three weaker notions of the FF property}, shows classes of monoids defined by properties more general than the FF property.
\begin{center}
	\begin{figure}[h]
		\begin{tikzcd} %[cramped] 
			\textbf{ BF }  \arrow[d, Leftarrow, shift right=1ex] \arrow[red, d, Rightarrow, "/"{anchor=center,sloped}, shift left=1ex]  & \phantom{ NA}  & \phantom{ NA } \\
			\textbf{ FF }     \arrow[r, Rightarrow, shift right=0.8ex] \arrow[red, r, Leftarrow, "/"{anchor=center,sloped}, shift left=0.8ex] 
			\arrow[d, Rightarrow, shift right=1ex] \arrow[red, d, Leftarrow, "/"{anchor=center,sloped}, shift left=1ex] 
			& \textbf{ IDF }    \arrow[r, Rightarrow, shift right=0.8ex] \arrow[red, r, Leftarrow, "/"{anchor=center,sloped}, shift left=0.8ex] 
			& \textbf{ U-FF } \\% \arrow[ld, Rightarrow, shift left =2.2ex] \arrow[red, ld, Leftarrow, "/"{anchor=center,sloped}, shift left=0.7ex] \\
			\ \ \textbf{  MCD-F} \  \ & \phantom{ NA } \ & \phantom{ NA }
		\end{tikzcd}
		\caption{The diagram shows all the implications among the discussed properties generalizing the FF property. The (red) marked arrows emphasize that none of the shown implications are reversible.}
		\label{fig:four weaker notions of the FF property}
	\end{figure}
\end{center}

In light of Proposition~\ref{prop:IDF domain neither BFD nor MCD-finite}, Example~\label{ex:Grams' monoid algebra is BF but neither IDF nor MCD-finite}, and Example~\label{ex:a PM that is MCD-finite but neither BF nor IDF}, we can conclude that the diagram in Figure~\ref{fig:four weaker notions of the FF property} shows all possible implication arrows.

\bigskip
We conclude this section with a construction, for each $\ell \in \nn \cup \{\infty\}$, a non-atomic IDF containing $\ell$ irreducibles up to associates.

\begin{prop} \label{prop:IDF domain with prescribed number of atoms}
    For any $\ell \in \nn \cup \{\infty \}$, there exists a non-atomic IDF domain containing exactly~$\ell$ irreducibles up to associate.
\end{prop}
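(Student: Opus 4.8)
The plan is to handle the two cases $\ell \in \nn$ and $\ell = \infty$ by a single flexible construction based on a monoid algebra over a finite field, mimicking and adapting the mechanism of Example~\ref{ex:a IDF that is neither BF nor MCD-finite}, where divisibility by the coordinate indeterminates was engineered so as to force (or forbid) the existence of atoms. First I would fix a prime $p$ and work inside $\ff_p[X;A]$ for a torsion-free abelian group $A$ chosen to contain enough ``divisible directions'' (copies of the Grothendieck group $\zz[\frac1p]$ of the rank-$1$ valuation monoid $M := \nn_0[\frac1p]$) to kill atomicity, together with finitely (or countably) many ``rigid directions'' (copies of $\zz$) whose job is to produce exactly the prescribed irreducibles. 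Concretely, for $\ell \in \nn$ I would take indeterminates $t_1,\dots,t_\ell$ (with exponents in $\nn_0$, not allowing $p$-th roots) and one indeterminate $x$ with exponents in $M$, and let $R := \ff_p[\,x^q,\ t_1 x^q,\ \dots,\ t_\ell x^q : q \in M\,]$, or a similar subring carefully chosen so that each $t_i$ (up to the unit group $\ff_p^\times$) is the unique atom coming from the $i$-th rigid direction while all $x$-powers remain non-atomic. For $\ell = \infty$ one replaces the finite family $t_1,\dots,t_\ell$ by a countable family $(t_n)_{n\ge1}$ and takes the evident union.

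The key steps, in order, are: (1) verify that $A$ is torsion-free so that $\ff_p[X;A]$ is a domain whose units are exactly the nonzero monomials, and identify $R^\times = \ff_p^\times$ by a valuation argument (each generating monomial of $R$ has nonnegative valuation in every rigid coordinate and in $x$, forcing every unit of $R$ to be a scalar); (2) show that $x$ and each product $x^q$ has a $p$-th root in $R$ (because $q/p \in M$), hence is non-atomic, and deduce that $R$ is not atomic; (3) identify the atoms: show that, up to associates, the only irreducibles of $R$ are $t_1,\dots,t_\ell$ (resp.\ $t_1,t_2,\dots$) — the containment ``$\supseteq$'' follows from a degree/valuation argument showing each $t_i$ cannot factor nontrivially in $R$, and ``$\subseteq$'' follows because any element with a nonzero $x$-power factor is non-atomic, while an element with no $x$-power is (up to a unit) a polynomial in the $t_i$ with the $x$-exponent of each monomial forced to $0$, and one checks such a polynomial is either a unit, a single $t_i$ up to a unit, or reducible; (4) conclude that $R$ has exactly $\ell$ irreducibles up to associates; (5) verify the IDF property: any $h \in R^*$ has only finitely many monomials, hence involves only finitely many of the $t_i$, and the only atoms that can divide $h$ are those $t_i$ actually appearing — so $h$ has at most $\ell$ (indeed finitely many) irreducible divisors even when $\ell = \infty$, giving the IDF property in all cases.

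I expect the main obstacle to be step (3), the precise determination of the atom set. The subtlety is that $R$ is a proper subring of the monoid algebra $\ff_p[X;A]$ cut out by an infinite family of generators, so divisibility \emph{in $R$} is more restrictive than divisibility in $\ff_p[X;A]$, and one must argue carefully that a would-be factor of $t_i$ (or of an element with no $x$-power) actually lies in $R$ — this is exactly the kind of homogeneity-and-support bookkeeping that was delicate in Example~\ref{ex:a IDF that is neither BF nor MCD-finite}. In particular I will need to choose the coupling between the $t_i$-directions and the $x$-direction (e.g.\ only allowing the generators $t_i x^q$ rather than $t_i$ alone) so that, on one hand, $t_i$ genuinely lies in $R$ (it does, as $t_i = (t_i x^q)/x^q$ with $x^q \in R$, or by taking $q=0$), and on the other hand no nonunit proper divisor of $t_i$ survives in $R$. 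A clean way to control this is to assign to $R$ a $\zz$-valued valuation $v_i$ (the $t_i$-degree) together with the $M$-valued $x$-valuation, observe that both descend to $\gp(R)$, and note that an atom must be homogeneous for all of these; the finitely many homogeneous components then reduce the problem to a transparent finite check. Once the atom set is pinned down, steps (4) and (5) are immediate, and the $\ell=\infty$ case follows by the same argument applied to the directed union $\bigcup_{k\ge1} \ff_p[\,x^q, t_1x^q,\dots,t_kx^q : q \in M\,]$.
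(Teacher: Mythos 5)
There is a genuine gap in step (3), and it is fatal to the construction as proposed. Your ring $R$ is generated over $\ff_p$ by monomials and (as you note) contains each $t_i$ together with $1$, so it contains the whole polynomial subring $\ff_p[t_1,\dots,t_\ell]$. Since $R^\times = \ff_p^\times$ and any factorization of an element of $x$-order and $x$-degree zero must take place inside $\ff_p[t_1,\dots,t_\ell]$ (the $x$-order and $x$-degree are additive), every monic irreducible polynomial of $\ff_p[t_1]$ with nonzero constant term --- for instance $1+t_1$, and more generally the infinitely many pairwise non-associate monic irreducibles of the one-variable polynomial ring over a finite field --- remains irreducible in $R$. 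So your claim that ``an element with no $x$-power is either a unit, a single $t_i$ up to a unit, or reducible'' is false, and $R$ has infinitely many atoms up to associates no matter what finite $\ell$ you choose. The proposed repair via homogeneity does not help either: atoms need not be homogeneous for your valuations ($1+t_1$ is an inhomogeneous atom), so the reduction to ``a transparent finite check'' is not available. The tension is structural: you need the $t_i$ to be rigid enough to be atoms, but then sums such as $1+t_i$ are nonunits that cannot be factored and cannot be made non-atomic by adjoining $p$-th roots in the $x$-direction alone.

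The missing idea is precisely the localization step in the paper's proof. There one forms $R = R_0[x_1,\dots,x_\ell]$ with $R_0$ an antimatter monoid algebra, takes the primes $I_0 = IR$ (extension of the augmentation ideal) and $I_k = x_kR$, and localizes at the saturated set $S = R \setminus \bigcup_{k=0}^{\ell} I_k$. This turns every element outside the union --- in particular every troublesome element like $1+x_1$ --- into a unit, so the only candidates for atoms lie in some $I_k$, and one then checks that up to associates the atoms are exactly $x_1,\dots,x_\ell$, while $x_0^m$ is a nonunit divisible by no atom (giving non-atomicity), and IDF follows trivially for finite $\ell$ and via a divisor-closed exhaustion for $\ell = \infty$. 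Your steps (1), (2), and (5) are in the right spirit and your $\ell=\infty$ directed-union idea parallels the paper's, but without a mechanism that makes ``generic'' elements into units, the atom count cannot be controlled.
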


\begin{proof}
    Let $M$ be an antimatter submonoid of the nonnegative cone of a linearly ordered abelian group (for instance, we can take $M$ as the additive monoid consisting of all nonnegative dyadic rationals), and let $R_0$ denote the monoid algebra of $M$ in an indeterminate $x_0$ over a field $\ff$. Since $M$ is cancellative and torsion-free, $R_0$ is an integral domain. Observe that the kernel $I$ of the ring homomorphism $R_0 \to \ \ff$ given by the assignment $f \mapsto f(0)$ is a maximal ideal of $R_0$. Let $x_1, \dots, x_\ell$ be distinct indeterminates (i.e., algebraically independent) over the ring $R_0$, and consider the polynomial ring
    \[
        R := R_0[x_1, \dots, x_\ell].
    \] 
    As $I$ is a prime ideal of $R_0$, its extension $I_0 := IR$ to $R$ is a prime ideal in $R$. On the other hand, it is clear that the ideal $I_k := x_k R$ is a prime ideal of $R$ for every $k \in \ldb 1, \ell \rdb$. Since $I_0, I_1, \dots, I_\ell$ are all prime ideals of $R$, the set
    \[
        S := R \setminus \bigcup_{k=0}^\ell I_k
    \]
    is a multiplicative subset of $R$. In addition, $S$ is saturated in $R$, which means that for all $r_1, r_2 \in R$ with $r_1 r_2 \in S$ we can ensure that $r_1,r_2 \in S$. To argue this, assume that $r_1 \notin S$ and then take $j \in \ldb 0,\ell \rdb$ so that $r_1 \in I_j$, whence $r_1 r_2 \in I_j \subseteq \bigcup_{k=0}^\ell I_k \subseteq R \setminus S$. Therefore, after localizing $R$ at $S$, we obtain an integral domain $T$ with $T^\times = S$. Before proceeding, let us argue the following claim.
    \smallskip

    \noindent \textsc{Claim.} $\mathcal{A}(T) = \bigcup_{k=1}^\ell T^\times x_k$.
    \smallskip

    \noindent \textsc{Proof of Claim.} For the inclusion $\bigcup_{k=1}^\ell T^\times x_k \subseteq \mathcal{A}(T)$, it suffices to argue that $x_1, \dots, x_\ell$ are irreducibles in $T$. To do so, fix $k \in \ldb 1,\ell \rdb$. First observe that $x_k \notin T^\times$ as, otherwise, we could take $u, v \in S$ such that $v = x_k u \in x_k R = I_k$, which is not possible because $I_k$ is disjoint from $S$. Thus, $x_k \notin T^\times$. Now write $x_k = \frac{r_1}{s_1} \frac{r_2}{s_2}$ for some $r_1,r_2 \in R \setminus S$ and $s_1, s_2 \in S$. Since $r_1 r_2 = x_k(s_1 s_2) \in I_k$, the fact that $I_k$ is a prime ideal of $R$ ensures that either $r_1 \in x_kR$ or $r_2 \in x_kR$. Assume, without loss of generality, that $r_2 \in x_kR$ and take $r_3 \in R$ such that $r_1 = x_k r_3$. Using this, along with the equality $r_1 r_2 = x_k s_1 s_2$, we obtain
    \[
        x_k s_1 s_2 = r_1 r_2 = x_k r_3 r_2.
    \]
    Observe that if both $r_1$ and $r_2$ both belonged to $R \setminus S$, then $r_2 r_3 = s_1 s_2 \in S$, which implies that $r_2 \in S$ as $S$ is saturated. Therefore $\frac{r_2}{s_2} \in T^\times$, and so $x_k$ is irreducible in $T$.

    For the inclusion $\bigcup_{k=1}^\ell T^\times x_k \subseteq \mathcal{A}(T)$, take a nonzero nonunit $f \in T$ such that $f \notin \bigcup_{i=1}^\ell T^\times x_i$, and let us verify that $f$ is not irreducible in $T$. After multiplying $f$ by a suitable element of $S$, we can assume that $f \in R$, whence $f \in R \setminus S = \bigcup_{k=0}^\ell I_k$ because $f$ is not a unit in~$T$. %Since $T^\times = S$, it suffices to prove that $f$ is not irreducible in $R$. 
    We consider the following two cases.
    \smallskip

    \textsc{Case 1:} $f \in \bigcup_{k=1}^\ell I_k$. In this case, we can take $k \in \ldb 1,\ell \rdb$ such that $f \in I_k = x_kR \subset x_kT$, and so we can write $f = x_k r_k$ for some $r_k \in R$. Note that $r_k \notin T^\times$ because $f$ and $x_k$ are not associate in $T$. Therefore $f \notin \mathcal{A}(T)$.
    \smallskip
    
    \textsc{Case 2:} $f \in I_0$. In this case, we can write $f = f_1 r_1 + \dots + f_m r_m$ for some nonzero $f_1, \dots, f_m \in I$ and $r_1, \dots, r_m \in R$. For each $i \in \ldb 1, m \rdb$, write $f_i = x_0^{q_i}g_i$ for some nonzero $q_i \in M$ and $g_i \in R_0$, and observe that $f_1, \dots, f_m \in x_0^m R_0 \subseteq x_0^mT$, %IR = I_0$, where $m := \min\{ q_1, \dots, q_m \}$. Since $M$ is antimatter, we can write $m = m_1 + m_2$ for nonzero $m_1, m_2 \in M$. As $m_1$ and $m_2$ are both positive, $x_0^{m_1}, x_0^{m_2} \in I_0$, whence $m_1, m_2 \notin S$. Hence $x_0^{m_1}$ is a nonunit of $T$ that divides $f$ but is not associate to $f$ in $T$. Thus, $f \notin \mathcal{A}(T)$.
    \smallskip

    Hence the only atoms of $T$ are precisely those in $\bigcup_{k=1}^\ell T^\times x_k$, and the claim is established.
    \smallskip

    We finally argue that $T$ is not atomic: indeed, the element $x_0^m$ is not atomic in $T$ for any nonzero $m \in M$. To see this, fix $m \in M$ with $m > 0$. Observe that $x_0^m \in IR = I_0$ and so $x_0^m \notin S = T^\times$. Hence $x_0^m$ is a nonunit of $T$. Notice now that if, for some $k \in \ldb 1,\ell \rdb$, we could write $x_0^m = x_k t_k$ for some $t_k \in T$, then $s_k x_0^m \in x_kR = I_k$ for some $s_k \in S$, and so $I_k$ would be a prime ideal of $R$ containing $s_k x_0^m$ but neither $s_k$ nor $x_0^m$. Hence $x_0^m$ is a nonunit of $T$ that is not divisible by any of the irreducibles of $T$, and so we conclude that $x_0^m$ is a nonzero element of $T$ that is not atomic.
    \smallskip

    Let us produce now an IDF domain with countably many irreducibles up to associate. Observe that the integral domain $R$ we have constructed above is the monoid algebra of $M \times \nn_0^\ell$ over the field $\ff$. In the same way, we can redefine $R$ to be the monoid algebra of the cancellative reduced torsion-free monoid $M \times \nn_0^{(\nn)}$ over $\ff$, where $M_0$ is the direct product of countably many copies of the rank-$1$ free monoid $\nn_0$ --- then the monic monomials in the monoid algebra $R$ have the form $x^{(m_0, m_1, \ldots)}$, where $m_0 \in M$ and $m_j \in \nn_0$ for every $j \in \nn$ but $m_j = 0$ for all but finitely many $j \in \nn$. Then we can define the ideals $I_n$ in the same way (for every $n \in \nn_0$), and consider the multiplicative subset $S$ of $R$ consisting of all the elements that do not belong to $\bigcup_{k=0}^\infty I_k$. Finally, we can define $T$ as the localization of $R$ at $S$ as we did before. To argue that $T$ is an IDF, take a nonzero nonunit $f \in T$ and let us show that $f$ has only finitely many irreducible divisors in $T$. After multiplying by a unit of $T$, we can assume that $f \in R$. After taking $\ell \in \nn$ large enough, we can further assume that $f \in R_\ell$, where subring
    \[
        R_\ell := \ff_0[M \times M_\ell] \quad \text{ and } \quad M_\ell := \{(m_1, m_2, \ldots) \in M : m_j = 0 \, \text{ for all } \, j > \ell \}.
    \]
    As $R_\ell^*$ is a divisor-closed submonoid of $R^*$, every divisor of $f$ in $R$ must belong to $R_\ell$. Thus, it follows from the previous part that $f$ has only finitely many irreducible divisors in $R_\ell$ up to associates. As $R_\ell^\times = R^\times = \ff^\times$, we obtain that $f$ has only finitely many irreducible divisors in $R$ up to associate. Hence $f$ has only finitely many divisors in $T$ up to associate, which allows us to conclude that~$T$ is an IDF.
\end{proof}

\bigskip
%%%%%%%%%%%%%%%%%
%%%%%%%%%%%%%%%%%
\section{The D+M Construction}
\label{sec:D+M}

Let $T$ be an integral domain, and let $K$ and $M$ be a subfield of $T$ and a nonzero maximal ideal of $T$, respectively, such that $T=K+M$. For a subring $D$ of $K$, set $R=D+M$. We establish precise conditions under which the U--FF property transfers from $T=K+M$ to subrings of $T$ of the form $R=k+M$, where $D$ as a subring of $K$.

When an integral domain $R$ contains atoms, we will provide a suitable theorem describing the behavior of the U--FF property with respect to the $D+M$ construction.
%We start with basic lemmas on units, associates, and irreducibles in the setting of the D+M construction.
%
%\begin{lem}\label{lem:units-reduction}
%	In the pullback $K+M$ one has
%	\[
%		T^\times = K^{\times}(1+M)\qquad\text{and}\qquad U(R)=k^{\times}(1+M).
%	\]
%	Let $\varphi_T:T\to K$ and $\varphi_R:R\to k$ be the canonical maps $\varphi(d+m)=d$. If $x\in T$ (resp.\ $R$) satisfies $\varphi_T(x)\neq 0$ (resp.\ $\varphi_R(x)\neq 0$), then
%	\[
%		x \ \sim_T \ \varphi_T(x)\qquad\bigl(\text{resp. }x \ \sim_R \ \varphi_R(x)\bigr).
%	\]
%	Consequently, two elements $x,y\in T$ with $\varphi_T(x),\varphi_T(y)\neq 0$ are associate in $T$ iff $x/y\in K^{\times}(1+M)$, and in $R$ iff $x/y\in k^{\times}(1+M)$.
%\end{lem}
%
%\begin{lem}[Location of atoms]\label{lem:atoms-location}
%Let $T=K+M$ and $R=k+M$.
%\begin{enumerate}
%	\item[(a)] If $M$ contains no atoms of $T$ (e.g.\ $T_M$ is rank-$1$ non-discrete), then every atom of $T$ and of $R$ lies outside $M$, and the sets of atoms of $T$ (resp.\ $R$) are identified with those of $K$ (resp.\ $k$) via $\varphi_T$ (resp.\ $\varphi_R$), up to associates.
%	
%	\item[(b)] If $M$ contains atoms (e.g.\ $T_M$ a DVR with uniformizer $\pi$), then, up to associates, the only atoms contributed by $M$ are associates of a uniformizer; powers $\pi^e$ have the unique factorization pattern $\pi\cdot\ldots\cdot\pi$ (length $e$) up to associates.
%\end{enumerate}
%\end{lem}

\begin{theorem} \label{prop:UFF-DplusM}
    Let $T$ be an integral domain, and let $K$ and $M$ be a subfield of $T$ and a nonzero maximal ideal of $T$ , respectively, such that $T := K+M$. For a subfield $k$ of $K$, set $R := k+M$. Then the following statements hold.
	\begin{enumerate}
		\item If $M$ contains an irreducible element, %(equivalently, the localization $T_M$ is not antimatter; e.g.\ the discrete valuation case). 
		then $R$ is a U-FFD if and only if $T$ is a U-FFD and the group $K^\times/k^\times$ is finite.
		\smallskip
		
		\item If $M$ contains no irreducible elements, %(e.g.\ $T_M$ is a rank-$1$ non-discrete valuation domain). 
		then $R$ is a U-FFD if and only if $T$ is a U-FFD. %In particular, if $T$ is quasilocal with maximal ideal $M$ and $M$ conains no irreducibles (for instance, $T$ a rank-$1$ non-discrete valuation domain), then $R$ is a U-FFD if and only if $T$ is a U-FFD.
	\end{enumerate}
\end{theorem}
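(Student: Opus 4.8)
The plan is to compare $R$ and $T$ through their reduced atomic monoids $\overline{A}_R := A(R^*)/R^\times$ and $\overline{A}_T := A(T^*)/T^\times$, where $A(-)$ denotes the atomic submonoid. First I would record the elementary structural facts of the $D+M$ configuration: $M$ is a maximal ideal of both $R$ and $T$ with $R/M \cong k$ and $T/M \cong K$, one has $R^\times = R \cap T^\times$, and reduction modulo $M$ induces an isomorphism $T^\times/R^\times \cong K^\times/k^\times$; in particular $R^\times = \{v \in T^\times : \overline{v} \in k^\times\}$. The crux is the description of the atoms. Writing $U := 1+M$, a short argument using the primality of $M$ together with this identity shows that every atom of $R$ (resp.\ of $T$) either lies in $M$ or is an $R$-associate (resp.\ a $T$-associate) of an atom lying in $U$, and moreover that $\mathcal{A}(R) \cap M = \mathcal{A}(T) \cap M =: \mathcal{A}_M$ and $\mathcal{A}(R) \cap U = \mathcal{A}(T) \cap U$. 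Consequently the inclusion $A(R^*) \subseteq A(T^*)$ descends to a surjective monoid homomorphism $\phi \colon \overline{A}_R \to \overline{A}_T$ that carries atoms to atoms; on atoms it is injective over $U$, and exactly $|K^\times/k^\times|$-to-one over $\mathcal{A}_M$ (all $T^\times$-multiples of an atom $m \in M$ already lie in $M \subseteq R$, and two of them are $R$-associate iff the corresponding units agree modulo $R^\times$). Since an element atomic in $R$ is atomic in $T$, and every element atomic in $T$ is a $T$-associate of one atomic in $R$, for each $b$ atomic in $R$ the homomorphism $\phi$ induces a surjection $\Phi \colon \mathsf{Z}_R(b) \to \mathsf{Z}_T(b)$ whose fiber over a factorization $y$ has cardinality at most $|K^\times/k^\times|^{|y|}$.

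Granting this, both parts follow by bookkeeping. For the ``if'' direction: if $T$ is a U-FFD and, in case (1), $K^\times/k^\times$ is finite, then for $b$ atomic in $R$ the set $\mathsf{Z}_T(b)$ is finite and $|\mathsf{Z}_R(b)| \le \sum_{y \in \mathsf{Z}_T(b)} |K^\times/k^\times|^{|y|} < \infty$, so $R$ is a U-FFD; in case (2), $\mathcal{A}_M = \emptyset$ forces $\phi$ to be injective on atoms, so every fiber of $\Phi$ is a singleton and no hypothesis on $K^\times/k^\times$ is needed. For the ``only if'' direction: if $R$ is a U-FFD, then $T$ is a U-FFD too, since any atomic $c \in T$ can be written $c = v c'$ with $v \in T^\times$ and $c'$ atomic in $R$, and then $|\mathsf{Z}_T(c)| = |\mathsf{Z}_T(c')| \le |\mathsf{Z}_R(c')| < \infty$ by surjectivity of $\Phi$. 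Finally, in case (1) it remains to force $K^\times/k^\times$ to be finite, which is exactly where the hypothesis $\mathcal{A}_M \neq \emptyset$ enters: fixing an atom $m_0 \in M$, the elements $(v m_0)(v^{-1} m_0)$ for $v \in T^\times$ are $R$-factorizations of $m_0^2$, and two of them (for $v$ and $v'$) coincide only when $[v']$ lies in $\{[v],[v]^{-1}\}$ inside $T^\times/R^\times \cong K^\times/k^\times$; since $|\mathsf{Z}_R(m_0^2)| < \infty$, the group $K^\times/k^\times$ must be finite.

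I expect the main obstacle to be the structural description of the atoms in full generality, in particular the two equalities $\mathcal{A}(R) \cap M = \mathcal{A}(T) \cap M$ and $\mathcal{A}(R) \cap U = \mathcal{A}(T) \cap U$: one must verify that a factorization in $T$ of an element of $M$ (or of $U$) can always be converted, by absorbing scalars from $K^\times$ and using that $M$ is prime, into a factorization of it by elements of $M$ (resp.\ of $U$), all of which automatically lie in $R$; the identification $R^\times = \{v \in T^\times : \overline{v} \in k^\times\}$ is the other ingredient. Once $\phi$ and this atom structure are in place, checking well-definedness, surjectivity, and the fiber bound for $\Phi$, together with transferring between $\mathsf{Z}_R$ and $\mathsf{Z}_T$ via the associate-invariance of factorization sets, is routine. (If one assumes in addition that $M \subseteq J(T)$ --- e.g.\ $T$ quasi-local, as in Gilmer's original valuation setting --- then $U \subseteq T^\times$, every atom lies in $M$, and part (2) degenerates to the statement that both rings are vacuously U-FFDs.)
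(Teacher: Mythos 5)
Your proposal is correct and follows the same overall strategy as the paper's proof---transfer factorizations between $R$ and $T$ and count the ambiguity by cosets of $k^\times$ in $K^\times$, then use a unit-twisting construction to force $K^\times/k^\times$ to be finite in the presence of an atom in $M$---but your scaffolding is genuinely different and, in one place, sounder. The paper works from the unit identities $T^\times = K^\times(1+M)$ and $R^\times = k^\times(1+M)$ (which implicitly require $1+M \subseteq T^\times$, i.e.\ $T$ quasi-local) and argues the reduction/lifting of factorizations rather informally via the projections $\varphi_T, \varphi_R$; you instead prove the general identities $R^\times = R \cap T^\times$ and $T^\times/R^\times \cong K^\times/k^\times$, establish the atom dichotomy $\mathcal{A}(R)\cap M = \mathcal{A}(T)\cap M$ and $\mathcal{A}(R)\cap(1+M) = \mathcal{A}(T)\cap(1+M)$, and package everything into the surjection $\Phi\colon \mathsf{Z}_R(b)\to\mathsf{Z}_T(b)$ with the explicit fiber bound $|K^\times/k^\times|^{|y|}$, from which both parts follow uniformly. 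Most importantly, in the necessity direction the paper twists a factorization $t=\prod p_i$ into irreducibles \emph{outside} $M$ by scalars $u_i \in K^\times$ with $\prod u_i = 1$ and asserts $u_i p_i \in R$, which fails in general (for $p_i \notin M$ one has $u_i p_i \in R$ only when $\overline{u_i p_i}\in k$); your version anchors the construction at an atom $m_0 \in M$, where $v m_0 \in M \subseteq R$ automatically, which is exactly where the hypothesis of part~(1) is used and which makes the argument watertight. The one point you flag as the main obstacle---converting a $T$-factorization of an element of $M$ or of $1+M$ into an $R$-factorization by absorbing scalars and using primality of $M$---does go through exactly as you sketch, so the proof is complete.
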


\begin{proof}
	Let $\varphi_T \colon T\to K$ and $\varphi_R \colon R \to k$ be the natural projections determined by the assignments $d+m \mapsto d$ for all $(d,m) \in K \times M$. As in standard $D+M$ pullbacks, one can see that
	\[
		T^\times = K^\times(1+M) \quad\text{ and } \quad R^\times = k^\times(1+M).
	\]
	Hence two elements $x,y \in T$ with $\varphi_T(x) \neq 0$ are associate in $T$ if and only if $x = u\,y$ for some $u \in K^\times(1+M)$, and are associate in $R$ if and only if $u \in k^\times(1+M)$. Thus, the only possible extra associate classes created by passing from $T$ to $R$ are controlled by the finite/infinite size of the quotient group $K^\times/k^\times$.
	\smallskip

    Let us argue now the first statements of the direct implications of both parts~(1) and~(2) holds, which means that $T$ has the U--UF property when $R$ has the same property. Assume that $R$ is a U-FFD. If $a \in T$ is an atomic element with $\varphi_T(a)\neq 0$, then $a$ is also atomic in~$R$, and the reduction/lifting correspondence shows that factorizations of $a$ in $T$ biject with factorizations in $R$ modulo replacing $K^\times$ by $k^\times$ in the associate relation. In particular, $a$ has finitely many factorizations in $T$, so $T$ is a U-FFD.
    \smallskip
	
	(1) Assume now that $M$ contains no irreducible elements.Then every irreducible element of $T$ (and of $R$) lies outside $M$. In particular, by the usual reduction argument, $\varphi_T: \mathcal{A}(T) \xrightarrow{\ \sim\ } \mathcal{A}(T) \cap K$, and the same can be done for $R$ with $k$ in place of $K$. Moreover, if $a \in T$ (or in $R$) with $\varphi(a) \neq 0$, then $a \sim \varphi(a)$ (multiply by a unit in $1+M$), so every factorization of $a$ is obtained by lifting a factorization of $\varphi(a)$ and multiplying each lifted factor by a unit in $1+M$.
	
	For the reverse implication, assume that $T$ is a U-FFD. Let $r \in R$ be atomic. Then $\varphi_R(r)\neq 0$ and $r\sim_R \varphi_R(r)$. Every factorization of $r$ in $R$ comes from a factorization of $\varphi_R(r)$ in $k$ lifted into $R$; but by the same reasoning inside $T$, the number of factorizations of $\varphi_R(r)$ in $k$ equals the number of factorizations of the corresponding element in $K$, which (lifting back to $T$) equals the number of factorizations of a $T$-associate of $r$, finite by hypothesis. Hence $r$ has finitely many factorizations in $R$, so $R$ is a U-FFD.
	\smallskip
		
	(2) Assume now that $M$ contains no irreducible elements (e.g.\ $T_M$ a DVR with a uniformizer $\pi$). In this case, up to associates, the irreducible elements of the form $p$ may occur as factors in both $T$ and $R$. This contributes only a bounded factorization choice: a power $p^e$ has only one factorization into irreducibles.
	
	For the reverse implication, assume that $T$ is a U-FFD, and $K^\times/k^\times$ is finite. Let $r\in R$ be atomic. Write a $T$-associate $t$ of $r$ as $t = u \cdot a \cdot p^e$ with $u \in T^\times$, $e \ge 0$, $a \notin M$ (if $e>0$, then $a$ can be a unit). Any factorization of $t$ in $T$, up to $T$-associates, is a concatenation of a factorization of $a$ in $T$ and the unique factorization of $p^e$. Passing to $R$, two lifted factorizations can become non-associate only by multiplying the lifted atoms by scalars from $K^\times$ that land in distinct cosets of $k^\times$; but there are only $|K^\times/k^\times| < \infty$ cosets, so from each $T$-factorization we obtain only finitely many $R$-factorizations up to $R$-associates. Since $t$ has finitely many factorizations in $T$ by hypothesis, $r$ has finitely many factorizations in $R$. Thus, $R$ is a U-FFD.
	\smallskip
    
	Finally, suppose by contradiction, that the quotient group $K^\times/k^\times$ is infinite. Choose any atomic element $t \in T$ with $\varphi_T(t) \neq 0$ and a fixed $T$-factorization $t = \prod_{i \in I} p_i$ into irreducibles outside $M$. For each $u \in K^\times$ pick a tuple $(u_i) \in (K^\times)^{(|I|)}$ with $\prod_{i \in I} u_i = 1$ and $u_1$ running over infinitely many distinct cosets in $K^\times/k^\times$. Then $t = \prod_{i \in I} (u_i p_i)$ are all factorizations in $R$ (since each $u_i p_i \in R^\bullet$) which are pairwise non-associate in $R$ because the first factor lives in infinitely many distinct $k^\times$-cosets. This yields infinitely many factorizations of $t$ in $R$, contradicting the U-FF property of $R$. Hence $K^\times/k^\times$ must be finite.
\end{proof}

%\begin{remark}
%	Taken together, both parts of our main theorem give a clean dichotomy: the valuation side contributes either no atoms (non-discrete case), in which the U-FF property is entirely controlled by $T$, or a single ``discrete” atom type (a uniformizer), in which the only extra source of proliferation of non-associate factorizations in $R$ compared to $T$ comes from the coset growth of $K^{\times}/k^{\times}$.
%\end{remark}

From the main theorem we can derive the following corollary, which addresses the case of $R = D+M$, where $D$ is a subring of $R$ rather than a subfield.

\begin{cor}\label{cor:DplusM}
	Let $T$ be an integral domain, and let $K$ and $M$ be a subfield of $T$ and a nonzero maximal ideal of $T$, respectively, such that $T := K+M$. For a subfield $k$ of $K$, set $R := D+M$. Then the following statements hold.
	\begin{enumerate}
		\item If $M$ contains an irreducible element of $T$, then $R$ is a U-FFD if and only if $T$ is a U-FFD and $K^{\times}/k^{\times}$ is finite.
        \smallskip
        
        \item If $M$ contains no irreducible element of $T$. Then $R$ is a U-FFD if and only if $T$ is a U-FFD.
	\end{enumerate}
\end{cor}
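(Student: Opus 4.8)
The plan is to obtain Corollary~\ref{cor:DplusM} as an immediate consequence of Theorem~\ref{prop:UFF-DplusM}. Indeed, with $R = k+M$ for a subfield $k$ of $K$, the setting is precisely the one treated in the theorem, and the two statements differ only in how the dichotomy is phrased: the corollary separates its cases according to whether $M$ contains an irreducible element \emph{of $T$}, whereas the theorem separates them according to whether $M$ contains an irreducible element at all. The entire content of the deduction is therefore to check that these two conditions agree, i.e.\ that $M$ contains an irreducible element of $T$ if and only if it contains an irreducible element of $R$; once this is established, parts~(1) and~(2) of the corollary follow word for word from the corresponding parts of the theorem.

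First I would record the unit-group descriptions used in the proof of Theorem~\ref{prop:UFF-DplusM}, namely $T^\times = K^\times(1+M)$ and $R^\times = k^\times(1+M)$. For any $t = c+m$ with $c \in K$ and $m \in M$, if $c \neq 0$ then $t = c(1+c^{-1}m) \in K^\times(1+M) = T^\times$, while if $c = 0$ then $t = m \in M$ is a nonunit; the same computation with $k$ in place of $K$ applies in $R$. Hence the set of nonunits of $T$ and the set of nonunits of $R$ both equal $M$. In particular, since every atom is a nonunit, we have $\mathcal{A}(T) \subseteq M$ and $\mathcal{A}(R) \subseteq M$, so the two versions of the dichotomy concern only elements of $M$.

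The key step is the observation that, for $m \in M$, the element $m$ is irreducible in $T$ if and only if it is irreducible in $R$. To see this, note that $m$ is \emph{reducible} in $T$ exactly when $m = ab$ for some nonunits $a,b$ of $T$, that is, for some $a,b \in M$; because $M \subseteq R$ and the product $ab$ is computed identically in $R$ and in $T$, this holds exactly when $m = ab$ for some nonunits $a,b \in M$ of $R$, i.e.\ exactly when $m$ is reducible in $R$. Taking contrapositives gives the claimed equivalence, and together with $\mathcal{A}(T), \mathcal{A}(R) \subseteq M$ it yields $\mathcal{A}(T) = \mathcal{A}(R)$. In particular $M$ contains an irreducible element of $T$ if and only if it contains an irreducible element of $R$, so the hypotheses of the corollary coincide with those of the theorem.

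With this reconciliation in place, the proof concludes by invoking Theorem~\ref{prop:UFF-DplusM}: in the case where $M$ contains an irreducible element of $T$, part~(1) of the theorem yields that $R$ is a U-FFD if and only if $T$ is a U-FFD and $K^\times/k^\times$ is finite; in the case where $M$ contains no irreducible element of $T$ (equivalently, neither $T$ nor $R$ has any atom, so that every atomic element is a unit and the U-FF property holds trivially on both sides), part~(2) yields that $R$ is a U-FFD if and only if $T$ is a U-FFD. I expect the only delicate point to be the bookkeeping in the key step, namely verifying that a factorization witnessing reducibility has \emph{both} factors in $M$ rather than having one of them a unit; this is exactly where the identity ``nonunits $= M$'' from the second paragraph is used.
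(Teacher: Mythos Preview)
The paper gives no separate proof of the corollary; it is stated as an immediate consequence of Theorem~\ref{prop:UFF-DplusM}, and your deduction---identifying that the only difference is the phrase ``of $T$'' and then showing $\mathcal{A}(T)=\mathcal{A}(R)\subseteq M$ via the nonunit description---is exactly the right way to make that inference precise. One caveat: the sentence preceding the corollary says it ``addresses the case of $R=D+M$, where $D$ is a subring of $K$ rather than a subfield,'' so the statement as printed (mixing $k$ and $D$) appears to contain a typo; under your reading $R=k+M$ with $k$ a subfield, the corollary is indeed the theorem verbatim, and your argument is correct.
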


%Go to sleep, boy! Its quite late but I wont go to sleep until I submit to arXiv! ah, ok :) I was not able to repair the proof of MCD finite and U-FF -> ascend U-FF, i will try again tomorrow. I left some comments where i think it broke down, maybe i could be wrong about those places too. I see. I will be heading to bed now. Thank you, and good night! :)

% This is not a problem; this is just the first arXiv post and we can hold this part until next time. You should definitely go. I'll see you tomorrow... well, I'll let you know!

The argument given in our proof shows precisely how the quotient of unit groups $K^\times/k^\times$ governs the possible proliferation of non-associate refinements of a fixed $T$-factorization when one views it in $R$: if $K^\times/k^\times$ is infinite, we can twist factors by unit scalars distributed with product $1$ to manufacture infinitely many pairwise non-associate factorizations in $R$. Also, when $M$ contains no irreducible elements (e.g., in the case of rank-$1$ non-discrete valuation side), the valuation contribution to atomic elements disappears, and the U-FF property is determined entirely by the $K$-side vs $k$-side reduction, which here behaves bijectively on factorizations.

When $D$ is not a field, one can still obtain analogues by replacing $k^{\times}$ with $U(D)$ in the unit-coset arguments, but the cleanest—and most commonly used—statements are those above with $k$ a field.

We proceed to discuss some related examples. We start by non-discrete valuation and the case where $M$ contains no irreducibles.

\begin{example}\label{ex:nondiscrete}
	Let $k$ be any field and consider the Hahn (or Puiseux) valuation domain
	\[
		T := \ k((t^{\mathbb{Q}}))^{\ge 0} \quad = \ \Bigl\{\,\sum_{\gamma\in \mathbb{Q}_{\ge 0}} a_\gamma t^\gamma \ :\ 
	\text{well-ordered support},\ a_\gamma\in k\,\Bigr\},
	\]
	with maximal ideal $M := \{ \sum_{\gamma>0} a_\gamma t^\gamma \}$ and coefficient (residue) field identified with $K=k$. After setting $T := K+M$, observe that $M$ contains no irreducible elements (as we are in the rank-$1$ non-discrete case), and $T$ is antimatter. Hence $T$ is vacuously a U-FFD. For any subfield $k \subseteq K(=k)$, the equality $R = k+M = T$, so $R$ is U-FF as well. This gives a large class of integral domains where U-FF holds automatic on both~$T$ and~$R$.
    \hfill $\blacksquare$
\end{example}

We proceed to discuss several examples, starting valuation of discrete valuation side with $K^\times/k^\times$ finite.

\begin{example} \label{ex:discrete-finite}
	Fix a prime power $q$ and $m \in \nn$, and let $K$ be a field of $q^m$ elements. Then set $T  :=  K[[t]] = K+M$, where $M$ is the maximal ideal $(t)$. Then $T$ is a DVR and, therefore, $T$ has the U-FF property. Observe that $M$ contains irreducible elements (those associates of $t$). In addition, we see that $K^\times/k^\times \cong \mathbb{F}_{q^m}^\times/\mathbb{F}_{q}^\times$ is a finite group of order $\frac{q^m-1}{q-1}$. Hence $R \ :=\ k+M \ =\ \mathbb{F}_q + tK[[t]]$ is a U-FFD.
    \hfill $\blacksquare$
\end{example}

Let us take a look at another example using valuations.

\begin{example}\label{ex:discrete-infinite}
	Now we consider any of the following field extension with infinite unit-quotient: either $k := \mathbb{Q} \subset K := \mathbb{Q}(u)$ or $k := \mathbb{R}\subset K := \mathbb{C}$. Consider the integral domain $T := K[[t]] = K+M$, where~$M$ is the maximal ideal $(t)$. Now consider the subring $R := k+M$ of $T$. Since $T$ is a DVR, it must be a U-FF. Observe that $M$ contains irreducible elements, but $K^\times/k^\times$ is infinite. It follows from the main theorem of this section that $R$ fails to have the U-FF property.
	
	Let us show an explicit witness of the violation of the U-FF property. In $T$ the element $t^2$ has the single $T$-factorization pattern $t \cdot t$ up to associates. For each $u\in K^\times$ write $t^2 \ =\ (u t)\cdot(u^{-1} t)$. Viewed in $R$, two factorizations corresponding to $u,v \in K^\times$ are associate if and only if $u k^\times = v k^\times$ in $K^\times/k^\times$. Thus, an infinite family of coset representatives in $K^\times/k^\times$ produces infinitely many pairwise non-associate factorizations of $t^2$ in $R$.
    \hfill $\blacksquare$
\end{example}

Here is another example using mixed coefficient fields via finite extensions.

\begin{example}\label{ex:mixed}
	Let $k$ be any field and let $K/k$ be a finite field extension. Put $T=K[[t]]$ and $R=k+ tK[[t]]$. Then $K^\times/k^\times$ embeds into a finite-dimensional $k$-torus and is in particular finite when $k$ is finite (see Example~\ref{ex:discrete-finite}), but typically infinite when $k$ is infinite (e.g.,\ $k=\mathbb{Q}$, $K=\mathbb{Q}(\sqrt{2})$). As a consequence, it follows from our main theorem that $R$ has the the U-FF property exactly in the finite-field situation. Otherwise, $R$ does not have the U-FF property.
    \hfill $\blacksquare$
\end{example}

%The submonoid $\qq_{\ge 0}$ of the additive monoid $\qq$ is antimatter and, therefore, a monoid that has the IDF property but not the BF property. On the other hand, 

% \begin{remark}
% 	With the notation as in Proposition~\ref{prop:FFD and D+M construction}, when $D$ is a field it follows from Brandis' Theorem~\cite{aB65} that $K^*/D^*$ is finite if and only if $K$ is finite or $D = K$.
% \end{remark}

% We conclude this section revisiting Example~\ref{ex:BFD F_1 + XF_2[X] and F_1 + XF_2[[X]]}.

% \begin{example}
% 	Let $F_1 \subsetneq F_2$ be a field extension, and set $R_1 = F_1 + XF_2[X]$ and $R_2 = F_1 + XF_2[[X]]$. As with the properties of being Noetherian and integrally closed, whether $R_1$ and $R_2$ are FFDs only depends on the field extension $F_1 \subsetneq F_2$. Indeed, because $F_2[X]$ and $F_2[[X]]$ are both FFDs, Proposition~\ref{prop:FFD and D+M construction} guarantees that $R_1$ and $R_2$ are FFDs if and only if $F_2^*/F_1^*$ is finite. Since $F_1 \neq F_2$, it follows from Brandis' Theorem~\cite{aB65} that $F_2^*/F_1^*$ is finite if and only if $F_2$ is finite. Finally, if $F_2$ is finite, then $|U(R_1)| = |F_1^*| < \infty$, and it follows from Proposition~\ref{prop:SFFD characterizations} that $R_1$ is, in fact, an SFFD.
% \end{example}

\bigskip
%%%%%%%%%%%%%%%%%%%%%%%%%%%%%
%%%%%%%%%%%%%%%%%%%%%%%%%%%%%
\section{A Characterization of the Finite Factorization Property}
\label{sec:a characterization of FFDs via near atomicity}

It is well known that an integral domain has the FF property if and only if it is an atomic IDF domain. This equivalence was first established in the seminal paper introducing the FF property \cite[Theorem~5.1]{AAZ90}. As the following example illustrates, this result is no longer true if one replaces atomicity by almost atomicity.

One can readily argue that every nearly atomic domain is almost atomic. However, if one replaces near atomicity by almost atomicity in Theorem~\ref{prop:a characterization of FFDs via near atomicity}, the resulting proposition no longer holds. %The following example sheds some light upon this observation.
	
\begin{example}
    Observe that $M = (\mathbb N_0 \times \mathbb N_0) \cup (\mathbb Z \times \mathbb N_{\ge 2})$, we consider the monoid algebra $R = \mathbb R[x,y; M]$ of $M$ over $\mathbb R$. First, note that $\mathbb R[x, y] \subset R$. Recall that $\mathbb R[x, y]$ is a UFD. Now, observe that every element $f \in R$ may be multiplied by a suitable monomial $x^k$ such that $x^k \cdot f \in \mathbb R[x,y]$.
    \smallskip
    
    \noindent \textsc{Claim 1.}  $R$ is an IDF domain.
    \smallskip
    
    \noindent \textsc{Proof of Claim 1.} Take any $f \in R$. Let $a \in \mathbb N_0$ such that $g := x^a \cdot f \in \mathbb R[x, y]$. Consider any irreducible $h \in R$. If $h \ne x$, then there must exist $b \in \mathbb N_0$ such that $x^b \cdot h \in \mathbb R[x, y]$ and $x \nmid_{\mathbb R[x, y]} x^b \cdot h$. Note that $x^b\cdot h \mid_{\mathbb R[x, y]} x^{\max\{0, b-a\}} \cdot g$. For each value of $x^b \cdot h$, only one value of $a$ will result in an irreducible $h$. But since $x^b \cdot h$ is not divisible by $x$, it must divide $g$. Because $g$ has a unique factorization in $\mathbb R[x, y]$, it follows that there are only finitely many possible values of $x^b \cdot h$. Thus, $f$ can only be divisible by finitely many irreducibles $h$. Hence the claim is established.
    \smallskip
    
    Let $g$ be an irreducible polynomial in $\mathbb R[x, y]$ that is not divisible by $x$ infinitely many times. Let $a$ be the maximal exponent of $x$ that divides $g$, such that $\frac{g}{x^a} \in R$.
    
    \noindent \textsc{Claim 2.}  
    Then $\frac{g}{x^a}$ is an irreducible of $R$.
    \smallskip
    
    \noindent \textsc{Proof of Claim 2.} Set $f := \frac{g}{x^a}$. By definition, $x \nmid_{R} f$. As a consequence, $x \nmid_{\mathbb R[x, y]} g$. Suppose that $f$ may be factored into two non-unit elements of $R$, $$f = \frac{g}{x^a} = \frac{g_1}{x^{k_1}} \cdot \frac{g_2}{x^{k_2}},$$ with $g_1, g_2 \in \mathbb R[x, y]$ and $k_1, k_2 \in \mathbb N_0$. We may assume that $x \nmid_{\mathbb R[x, y]} g_1, g_2$, otherwise $k_1$ and $k_2$ may be increased. If $a > k_1 + k_2$, then $g = g_1 \cdot g_2 \cdot x^{a-k_1-k_2}$. Since $x \nmid g$, this is an obvious contradiction. If $a = k_1 + k_2$, then $g = g_1 \cdot g_2$. Since $g$ is an irreducible, either $g_1 = 1$ or $g_2 = 1$. Without loss of generality, suppose that $g_1 = 1$. Then, since $$\frac{g_1}{x^{k_1}} = \frac{1}{x^{k_1}} \in R,$$ we must have $k_1 = 0$. Thus $\frac{g_1}{x^{k_1}} = 1$ is actually a unit, contradiction. If $a < k_1 + k_2$, then $g \cdot x^{k_1+k_2-a} = g_1 \cdot g_2$. But since $x \nmid g_1, g_2$, this is a contradiction. In all cases, we have a contradiction. Hence, $f$ is an irreducible. Thus, Claim~2 is established.
    \smallskip
    
    We proceed to argue that $R$ is almost atomic. To do so, take any nonzero $f \in R$ and then take a minimal $a \in \mathbb N_0$ such that $g = x^a \cdot f \in \mathbb R[x, y]$. Consider the unique factorization $$g = \prod_i g_i$$ of $g$ into (not necessarily distinct) irreducibles $g_i \in \mathbb R[x, y]$. For each $g_i$, let $a_i$ be the maximum exponent of $x$ that divides $g_i$ in $R$. As a result, $\frac{g_i}{x^{a_i}}$ is an irreducible of $R$. Then, we have the factorization $$x^a \cdot f = \prod_i g = \prod_i x^{a_i} \cdot \prod_i \frac{g_i}{x^{a_i}}$$ into irreducibles. Thus, $f$ is almost atomic.
   
    Finally, we show that the integral domain $R$ is not atomic. Consider any monomial $f$ of $R$ with negative $x$ exponent. The only possible factorizations of $f$ must be into other monomials. However, the only irreducible monomials of $R$ are $x$ and $y$, and it is obvious that no product of these monomials will have a negative exponent of $x$. Hence, $f$ has no factorization into irreducibles, so $R$ is not atomic. Thus, we conclude that~$R$ is a nearly atomic IDF domain that is not atomic and, therefore, not an FFD.
    \hfill $\blacksquare$
\end{example}

\bigskip
%%%%%%%%%%%%%%%%%%%%%%%%%%%%%%%%%%%%%%%%%%%%%%%%%%%%%%
%%%%%%%%%%%%%%%%%%%%%%%%%%%%%%%%%%%%%%%%%%%%%%%%%%%%%%
\section{Ascent of the U-FFD to Polynomial Extensions}
\label{sec:ascent}

This final section is devoted to the construction of an integral domain $R$ having the U--FF property such that $R[x]$ does not have the U--FF property, which will allow to conclude that the U-FF does not ascend to polynomial extensions in general.

\begin{example}
	Let $x, y_1, y_2, y_3$ be pairwise distinct indeterminates, and let $(z_n)_{n \ge 0}$ be an infinite sequence of pairwise distinct indeterminates whose whose underlying set is disjoint from$\{x, y_1, y_2, y_3\}$. Now fix a prime $p \in \pp$. Consider the integral domain $-R := R_1 + R_2\big[x^q : q \in \mathbb{Q}_{> 0} \cap \mathbb{N}_0\big[\frac1p\big]\big]$
	\[
		R_1 := \mathbb F_p[z_n : n \in \mathbb \nn_0] 
	\]
	and then set
	\[
		R_2 := \mathbb \ff_p\Big[ x^q, z_n^q, \left(\frac{y_1 x}{z_i}\right)^q, \left(\frac{y_2 x}{z_i}\right)^q, \left(\frac{y_3 x}{z_i}\right)^q : (i,q) \in \mathbb N_0 \times \mathbb N_0\Big[ \frac{1}{p}\Big]\Big]
	\]
	Note that $z_i$ is prime in $R$ for all $i \in \mathbb N_0$. We claim that $R$ is U-FFD. First, consider the subring
	\[
		R' = \ff_p\big[z_n, x, y_1x, y_2x, y_3x : n \in \mathbb{N}_0 \big] \subset R.
	\]
	This ring is isomorphic to $R_1$ (having a countably infinite number of irreducibles), and thus is a UFD. Observe that every element $f \in R$ may be expressed in the form
    \[
        f = \left(\frac{a}{b}\right)^{p^{-n}},
    \]
    with $b = \prod_{i=0}^m z_i^{c_i}$ for $a \in R'$ and $c_i, m, n \in \mathbb N_0$ for every $i \in \ldb 0,m \rdb$. Further, we may stipulate that if $z_i \mid b$ for $i \in \mathbb N_0$, then $z_i \nmid_{R'} a$. Observe that all elements $f \in R$ with $f(x=0) = 0$ or $f(x=0) = 1$ are automatically reducible, as $\sqrt f \in R$. Therefore, the set of atoms is contained in the set $S$ of remaining elements of $R$—the elements with nonzero and nonunit constant term. As $S$ is multiplicatively closed, we know that all elements of $R \setminus S$ are not atomic, so we may ignore them for the purposes of U-FF.
	
	Suppose that $f \in S$ may be factored into irreducibles of $R$ as $f = \prod g_i$. We express
	\[
		\left(\frac{a}{b}\right)^{p^{-n}} = \prod \left(\frac{a_i}{b_i}\right)^{p^{-n_i}}.
	\]
	Note that for any particular $a_i$, there is only at most one choice of $b_i$ and $n_i$ with $\big(\frac{a_i}{b_i}\big)^{p^{-n_i}}$ irreducible. Furthermore, $a_i$ must be irreducible, otherwise one would be able to factorize $g_i = \big(\frac{a_i}{b_i}\big)^{p^{-n_i}}$ into the product of non-units, contradicting the fact that $g_i$ is irreducible. Therefore, it suffices to show that there are only finitely many possible $a_i$. Without loss of generality, assume that $n_i = n_j \ge n$ for all $i, j$. Thus, we may ultimately write $\left(\frac{a}{b}\right)^{p^c} = \prod \frac{a_i}{b_i}$ for $c \in \mathbb N_0$. Therefore,
	\[
		a^{p^c} \prod b_i = b^{p^c}\prod a_i.
	\]
	Now, every power of $z_i$ in $b^{p^c}$ must divide into $\prod b_i$, as these $z_i$ cannot divide $a^{p^c}$. Note that $a^{p^c}$ has a unique factorization in $R'$ as $2^c$ copies of the unique factorization of $a$ in $R'$, and $\prod b_i / b^{p^c}$ also factorizes uniquely as the product of primes $z_j$. Therefore, it follows that $a_i$ must either be $z_j$ for some $j \in \mathbb N_0$ or one of the finitely many factors in the factorization of $a$. Therefore, we only need to rule out the case where $a_i$ takes on the value of infinitely many $z_j$. But when $a_i = z_j$, this implies that $b_i = n_i = 0$, so $g_i = z_j$. Because $f \in S$, meaning that $f$ has a nonzero constant term, it naturally follows that we cannot have infinitely many $g_i = z_j$ dividing $f$. Hence, we conclude that $R$ is a U-FFD. 
	
	Furthermore, analogously to Example~\ref{ex:a IDF that is neither BF nor MCD-finite}, there exist $a, b \in R$ such that $z_i$ is an MCD for all $i \in \mathbb N_0$. In $R$, all $z_i$ are irreducible and non-associate, so we have identified two elements with infinitely many atomic MCDs. It remains to show that $R$ is a U-FFD. But this is trivial because the only irreducible elements in $R$ are $z_i$ for $i \in \mathbb N_0$, so the atomic submonoid of $R^*$ clearly has the FF property.
	
	Now, we consider the polynomial extension $R[w]$. Take $a,b \in R$ such that $z_i$ is an MCD of the set $S := \{a,b\}$ for infinitely many choices of the index $i \in \nn_0$. Each of these MCDs remains atomic in~$R[w]$. Now, consider the linear polynomial $a+bw \in R[w]$. For each $i \in \mathbb N_0$, one can see that $\frac{a}{z_i} + \frac{b}{z_i}w$ is an irreducible in $R[x]$ because it is not divisible by any nonunit constant. As a consequence,
    \[
        a+bw = z_i \left(\frac{a}{z_i} + \frac{b}{z_i}w\right)
    \]
    is a factorization of $a+bw$ for all $i \in \mathbb N_0$. Hence we conclude that the polynomial extension $R[w]$ cannot have the U-FF property. 
\end{example}

We conclude with the following question.

\begin{question}
	Is it true that every near atomic IDF domain is an FFD.
	%, rather than atomicity, already suffices when combined with the IDF property.
\end{question}

\bigskip
%%%%%%%%%%%%%%%%
%%%%%%%%%%%%%%%%
\section*{Acknowledgments}

During the period of this collaboration, the first author was part of PRIMES-USA, a year-long math research program hosted by the MIT Mathematics department. During the same period, the second author was kindly supported by the NSF under the award DMS-2213323. The authors thank the PRIMES program for this rewarding research opportunity.

\bigskip
%%%%%%%%%%%%%%%%%%%%%%%%%%%
%%%%%%%%%%%%%%%%%%%%%%%%%%%

\end{document}